\documentclass[a4paper,10pt]{article}
\usepackage{amsmath}
\usepackage{hyperref}
\usepackage{amsfonts}
\usepackage{amssymb}
\usepackage{amsthm}
\usepackage{graphics}
\usepackage{color}
\usepackage[pdftex]{graphicx}
\newtheorem{theorem}{Theorem}[section]
\newtheorem{lemma}[theorem]{Lemma}
\newtheorem{corollary}[theorem]{Corollary}
\newtheorem{definition}[theorem]{Definition}
\newtheorem{example}[theorem]{Example}
\def\bee{\begin{equation}}
\def\eee{\end{equation}}

\title{\large{\textbf{ON A UNIFIED THEORY OF NUMBERS}}}
\author{
Nilotpal Kanti Sinha
\\
\small{Great Lakes Institute of Management, Chennai, India.}
\\
\small{e-mail: \texttt{nilotpalsinha@gmail.com}}
\\
\\
Marek Wolf
\\
\small{Institute of Theoretical Physics,Wroclaw, Poland.}
\\
\small{e-mail: \texttt{mwolf@ift.uni.wroc.pl}}}

\begin{document}
\maketitle{\textit{Dedicated to Grigori Yakovlevich Perelman.}}

\begin{abstract}
\small{We show that several results in seemingly independent areas of number theory such as divergent series, summation of arithmetic functions, uniform distribution modulo one and summation over prime numbers can be unified under a single equation. We introduce the concept of asymptotic equidistribution and apply our method to derive several new results in the above areas of topics, especially in the theory of prime numbers.}
\end{abstract}

\section{Introduction}

In 1873 James Clerk Maxwell combined electricity and magnetism using the Maxwell's Equations into a single force called electromagnetism and since then, physicists have been trying to discover the elusive unified theory that can explain all the forces in nature using a common set of equations. Although the title of our paper may appear too broad at this stage, we present the analogous idea of unifying seemingly unrelated topics in number theory such as the divergent series, summation of arithmetic functions, uniform distribution modulo one and summation involving prime numbers, under one common equation.
\\
\\
We shall see that several results in the above areas of mathematics which are currently treated as independent results can be unified under a  single equation using summations of the type $\sum_{r=1}^{n} d_r f(S_r)$ where
$S_r = \sum_{i=1}^{r} d_i$. In section 3, we present our unified Master Theorem. In section 4, we present the Equivalence Theorem which shows that the ratios of the partial sum $\frac{S_r}{S_n}$ of a divergent series has properties analogous to that of a sequence $a_n$ uniformly distributed modulo one. In section 5, we introduce the concept of asymptotic equidistribution modulo one and prove the sequence of prime numbers is asymptotic equidistributed modulo one. In section 6 and 7, we apply the Master Theorem and its variations to derive several new results on summations involving prime numbers and derive beautiful asymptotic formula such as:
\\
\begin{displaymath}
\lim_{n \rightarrow \infty}\frac{1}{\ln^2 n}\sum_{r=1}^{n} \frac{1}{\gamma_r}\Big(\frac{1}{r}+\frac{1}{p_r}\Big)
\Big(\tan^{-1} \frac{\gamma_r}{\gamma_n}\Big)^2 e^{H_r + \frac{1}{p_1} + \ldots + \frac{1}{p_r}}
= \frac{e^{\gamma + M}}{4}\Big(G - \frac{7\zeta(3)}{4\pi}\Big)
\end{displaymath}
\\
which connects Apery's constant $\zeta(3)$, Catalan constant G, Euler-Mascheroni constant $\gamma$, exponential constant $e$, harmonic numner $H_n$, Meissel-Merten constant $M$, $n^{th}$ prime number $p_n$, imaginary part of the $n^{th}$ non-trivial zero of the Riemann zeta function $\gamma_n$ and the fundamental constants of a circle $\pi$. In section 9, we present the generalized the definitions of the Riemann zeta function and the Euler constants. All the identities that we have discovered in this paper through rigorous methods have also been numerically verified, thereby confirming our theory of unification. We have calculated the constants that appear in different equations by direct computation using prime numbers up to $2^{44}$. In section 10, we give the details of the verifications of identities as well as computations of constants. To conclude our paper, in section 11, we give an outline of some of the work that can be done in future in this line of research.
\\
\\
And with due apologies to Abel, we shall see in this paper divergent series are not a useless invention of the devil after all. Divergent series act as the bond that unifies different areas of mathematics.

\section{Preliminary lemmas}

In the rest of this paper, $a_n$ shall denote a sequence uniformly distributed modulo one. For the sake of simplicity, we assume that $0<a_n<1$. If $a_n>1$ then its fractional part \{$a_n$\} is to be used.

\begin{lemma}
If $a_n$ is uniformly distributed modulo one then for every Riemann integrable function $f$ in $[0,1]$,
\begin{equation}
\lim_{n \to \infty} \frac {1}{n} \sum_{r=1}^{n} f(a_r) = \int_{0}^{1}f(x)dx.
\label{lemma2.1a}
\end{equation}
\end{lemma}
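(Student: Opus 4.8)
The plan is to prove this in stages, starting from the definition of uniform distribution modulo one and bootstrapping from the simplest functions to all Riemann integrable functions. Recall that $a_n$ being uniformly distributed modulo one means precisely that for every subinterval $[\alpha,\beta] \subseteq [0,1]$,
\begin{equation}
\lim_{n \to \infty} \frac{1}{n} \#\{ r \le n : \alpha \le a_r < \beta \} = \beta - \alpha.
\label{plan-weyl}
\end{equation}
Writing $\chi_{[\alpha,\beta]}$ for the indicator function of $[\alpha,\beta]$, equation \eqref{plan-weyl} says exactly that the conclusion \eqref{lemma2.1a} holds when $f = \chi_{[\alpha,\beta]}$, since $\int_0^1 \chi_{[\alpha,\beta]}(x)\,dx = \beta - \alpha$. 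This is the base case on which everything rests.

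First I would extend this from a single indicator to \emph{step functions}, i.e.\ finite linear combinations $\varphi = \sum_{k} c_k \chi_{I_k}$ over disjoint subintervals $I_k$. Because the limit in \eqref{lemma2.1a} is linear in $f$ and a finite sum of limits is the limit of the sum, the identity holds verbatim for every step function $\varphi$, with $\int_0^1 \varphi = \sum_k c_k |I_k|$. Next I would pass to a general Riemann integrable $f$ by sandwiching it between step functions. By definition of Riemann integrability, for any $\varepsilon > 0$ there exist step functions $\varphi_{-} \le f \le \varphi_{+}$ on $[0,1]$ with $\int_0^1 (\varphi_{+} - \varphi_{-})\,dx < \varepsilon$; these arise from lower and upper Darboux sums of a sufficiently fine partition. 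Monotonicity of the averaging operator then gives
\begin{equation}
\frac{1}{n}\sum_{r=1}^{n} \varphi_{-}(a_r) \le \frac{1}{n}\sum_{r=1}^{n} f(a_r) \le \frac{1}{n}\sum_{r=1}^{n} \varphi_{+}(a_r)
\label{plan-sandwich}
\end{equation}
for every $n$.

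Taking $\liminf$ and $\limsup$ through \eqref{plan-sandwich} and applying the already-established step-function case to the two outer terms, I would conclude that both $\liminf_n$ and $\limsup_n$ of the middle average lie within $\varepsilon$ of $\int_0^1 f\,dx$. Since $\varepsilon$ is arbitrary, the $\liminf$ and $\limsup$ coincide with $\int_0^1 f\,dx$, so the limit exists and equals the integral, which is exactly \eqref{lemma2.1a}. The one point demanding care — really the crux of the argument — is the sandwiching step: one must invoke the Riemann (Darboux) criterion to guarantee step functions whose integrals pinch $f$ arbitrarily closely, and note that the finitely many partition endpoints form a measure-zero set so that any ambiguity of $f$ or the indicators there contributes nothing to the limit. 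The rest is bookkeeping in linearity and order.
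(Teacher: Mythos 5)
Your argument is correct and complete: it is the standard proof (indicators from the definition of equidistribution, linearity to step functions, Darboux sandwiching to general Riemann integrable $f$), and the one subtle point — that Riemann integrability supplies step functions $\varphi_-\le f\le\varphi_+$ with $\int_0^1(\varphi_+-\varphi_-)\,dx<\varepsilon$ — is handled properly. The paper itself offers no proof, merely citing the result as well known, so your write-up simply supplies the classical argument behind that citation.
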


\begin{proof}
Well known. (See \cite{gr}, Page 3)
\qedhere\
\end{proof}

\begin{lemma}
If f is monotonic and continuous and defined in $[1,n]$ then,
\begin{equation}
\sum_{r=1}^{n} f(r)= \int_{1}^{n} f(x)dx + O(|f(n)|+|f(1)|).
\label{lemma2.2a}
\end{equation}
Further if f is continuously decreasing to zero then,
\begin{equation}
\sum_{r=1}^{n} f(r)= \int_{1}^{n} f(x)dx + C_f + O(f(n)).
\label{lemma2.2b}
\end{equation}
where $C_f$ is a constant that depends only on f.
\end{lemma}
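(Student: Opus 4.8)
The plan is to prove both statements by comparing the sum term-by-term with the integral over each unit interval and exploiting monotonicity. Assume first that $f$ is decreasing; the increasing case then follows by applying the result to $-f$. For each integer $r$ with $1 \le r \le n-1$, set
\[
d_r = f(r) - \int_{r}^{r+1} f(x)\,dx .
\]
Since $f$ is continuous and decreasing, on $[r,r+1]$ we have $f(r+1) \le f(x) \le f(r)$, and integrating yields $f(r+1) \le \int_{r}^{r+1} f(x)\,dx \le f(r)$. Hence $0 \le d_r \le f(r) - f(r+1)$, the single inequality on which everything else rests.

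Summing the defining relation over $r = 1, \dots, n-1$ and adding the last term $f(n)$, I would obtain the exact identity
\[
\sum_{r=1}^{n} f(r) = \int_{1}^{n} f(x)\,dx + f(n) + \sum_{r=1}^{n-1} d_r .
\]
For the first statement, the bound $0 \le d_r \le f(r) - f(r+1)$ telescopes to $0 \le \sum_{r=1}^{n-1} d_r \le f(1) - f(n)$, so the correction term $f(n) + \sum_{r=1}^{n-1} d_r$ lies between $f(n)$ and $f(1)$ and is therefore bounded in absolute value by $|f(1)| + |f(n)|$. This is precisely the claimed $O(|f(n)| + |f(1)|)$, and replacing $f$ by $-f$ disposes of the increasing case.

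For the second statement, assume in addition that $f$ decreases to zero. Now the series $\sum_{r \ge 1} d_r$ has nonnegative terms and partial sums bounded above by $f(1)$, hence converges; I define $C_f = \sum_{r=1}^{\infty} d_r$, a constant depending only on $f$. Rewriting the identity above as
\[
\sum_{r=1}^{n} f(r) - \int_{1}^{n} f(x)\,dx - C_f = f(n) - \sum_{r=n}^{\infty} d_r ,
\]
the problem reduces to estimating the tail. Applying $0 \le d_r \le f(r) - f(r+1)$ once more and telescoping to infinity, which is now legitimate because $f(r) \to 0$, gives $0 \le \sum_{r=n}^{\infty} d_r \le f(n)$. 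Thus the right-hand side lies between $0$ and $f(n)$, which is $O(f(n))$, as required.

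The main obstacle, and the only place needing genuine care, is the passage to the limit defining $C_f$: one must confirm that convergence of $\sum d_r$ is guaranteed purely by monotonicity together with the decay $f(n) \to 0$, and that the tail telescopes cleanly to the bound $f(n)$. Everything else is routine bookkeeping once the inequality $f(r+1) \le \int_{r}^{r+1} f \le f(r)$ is established.
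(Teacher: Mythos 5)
Your proof is correct and complete: the key inequality $f(r+1)\le\int_r^{r+1}f\le f(r)$, the telescoping bounds, and the definition of $C_f$ as the convergent sum of the nonnegative discrepancies $d_r$ all check out, including the reduction of the increasing case to the decreasing one via $-f$. The paper itself offers no proof of this lemma --- it simply cites Iwaniec--Kowalski --- and your argument is precisely the standard sum-versus-integral comparison that such a reference contains, so there is nothing to contrast.
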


\begin{proof}
Well known. (See \cite{ik}, 1.62-1.67, Page 19-20)
\qedhere\
\end{proof}

\begin{lemma}
Let $S(n)=\sum_{r=1}^{n} {b(r)}$. If $g$ is defined in $[S(1),S(n)]$ and $b(x)g(S(x))$ is monotonic and continuous in $[1,n]$ then,
\begin{equation}
\sum_{r=1}^{n} b(r)g(S(r)) = \int_{S(1)}^{S(n)}g(x)dx + O(\delta (n))
\label{lemma2.3a}
\end{equation}
where $\delta (n) = |b(n)g(S(n)|+|b(1)g(S(1))| + \int_{1}^{n}b'(y)g(S(y))dy$.
\end{lemma}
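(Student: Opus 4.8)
The plan is to reduce Lemma 2.3 to Lemma 2.2 via a change of variables, treating the sum $\sum_{r=1}^n b(r) g(S(r))$ as a Riemann-type sum whose summand is the monotonic continuous function $h(x) := b(x) g(S(x))$ evaluated at integers. Since $h$ is monotonic and continuous on $[1,n]$, Lemma 2.2 (equation \eqref{lemma2.2a}) applies directly to give
\begin{displaymath}
\sum_{r=1}^{n} b(r) g(S(r)) = \int_{1}^{n} b(x) g(S(x))\, dx + O\big(|b(n)g(S(n))| + |b(1)g(S(1))|\big).
\end{displaymath}
So the first step just invokes the previous lemma on $h$. The remaining work is to transform the integral $\int_1^n b(x) g(S(x))\, dx$ into the target $\int_{S(1)}^{S(n)} g(x)\, dx$ and to account for the discrepancy as part of the error term $\delta(n)$.

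**Next I would** perform the substitution $u = S(x)$, so that $du = S'(x)\, dx$. Here I must treat $S(x) = \sum_{r=1}^x b(r)$ as a differentiable interpolation of the partial sums, with the heuristic $S'(x) = b(x)$; this is the natural continuous extension (e.g. via $S(x) = \int_1^x b(t)\,dt$ plus a correction, or by direct interpolation). Under this substitution the integral becomes $\int_{S(1)}^{S(n)} g(u)\, du$ exactly when $S'(x) = b(x)$ holds identically. In general $S'(x)$ and $b(x)$ differ, so I would write $b(x) = S'(x) + \big(b(x) - S'(x)\big)$ and split the integral into the main term $\int_{S(1)}^{S(n)} g(u)\, du$ plus a remainder $\int_1^n \big(b(x) - S'(x)\big) g(S(x))\, dx$. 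To reconcile this with the stated error $\int_1^n b'(y) g(S(y))\, dy$, I would likely integrate by parts or use summation-by-parts (Abel summation) on the original sum, since the appearance of $b'(y)$ in $\delta(n)$ strongly suggests that an integration-by-parts step producing a boundary term and a $\int b' \cdot g$ term is the intended route, rather than a naive change of variables.

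**The cleanest argument** is probably Abel summation: write $\sum_{r=1}^n b(r) g(S(r))$ and compare consecutive partial sums, using $g(S(r)) - g(S(r-1)) \approx b(r) g'(S(r))$ by the mean value theorem, then telescope. Comparing the resulting expression to the integral $\int_{S(1)}^{S(n)} g(x)\,dx$ via the fundamental theorem of calculus, the accumulated errors from (i) replacing sums by integrals, (ii) the mean-value-theorem approximations, and (iii) the boundary contributions assemble into exactly $\delta(n) = |b(n)g(S(n))| + |b(1)g(S(1))| + \int_1^n b'(y) g(S(y))\, dy$. The boundary terms match the Euler–Maclaurin-type endpoint corrections from Lemma 2.2, and the $\int b' g$ term captures the variation of $b$ across the summation range.

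**The main obstacle** I anticipate is rigorously justifying the continuous interpolation of $S(x)$ and the identification $S'(x) = b(x)$ in the change of variables, since $S$ is a priori only defined on integers; one must commit to a specific smooth extension and verify that the monotonicity/continuity hypotheses on $b(x)g(S(x))$ survive it. A secondary difficulty is controlling the error terms uniformly so they collapse cleanly into the single expression $\delta(n)$ without hidden assumptions on the sign of $b'$ or on $g$ being monotonic — the statement only assumes $b(x)g(S(x))$ is monotonic, not $g$ or $b$ separately, so care is needed that Lemma 2.2 is applied to the correct composite function and that the $b'$ term in $\delta(n)$ is interpreted as a signed integral rather than one requiring absolute values.
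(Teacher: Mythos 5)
Your primary route is essentially the paper's own proof: the paper applies Lemma 2.2 (equation \eqref{lemma2.2a}) to the composite function $b(y)g(S(y))$ to pass from the sum to $\int_1^n b(y)g(S(y))\,dy$, and separately performs the substitution $x=S(y)$ using $S'(y)=b(y)+O(b'(y))$ (obtained by differentiating the Euler--Maclaurin approximation $S(y)=\int_1^y b(t)\,dt+O(|b(y)|+|b(1)|)$), which is exactly your decomposition $b(x)=S'(x)+(b(x)-S'(x))$ producing the $\int_1^n b'(y)g(S(y))\,dy$ contribution to $\delta(n)$. The Abel-summation alternative you float in your third paragraph is not needed and is not what the paper does; the two error sources you identify (endpoint terms from Lemma 2.2 and the $S'\ne b$ discrepancy) are precisely the two pieces the paper assembles into $\delta(n)$.
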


\begin{proof} 
From \ref{lemma2.2a}, we have
\\
\begin{displaymath}
S(y) = \sum_{r=1}^{y}b(r) = \int_{1}^{y} b(t)dt + O(|b(y)|+|b(1)|).
\end{displaymath}
\\
Differentiating $S(y)$ we get $S'(y)=b(y)+O(b'(y))$. Let $S(y)=x$; then,
\begin{displaymath} S'(y)dy = b(y)dy + O(b'(y)dy) = dx \end{displaymath}
\begin{displaymath} b(y)dy = dx + O(b'(y)dy) \end{displaymath}
\begin{displaymath} g(S(y))b(y)dy=g(x)dx+O(b'(y)g(S(y))dy) \end{displaymath}
\begin{equation}
\int_{1}^{n}b(y)g(S(y))dy = \int_{S(1)}^{S(n)}g(x)dx + O \Big(\int_{1}^{n}b'(y)g(S(y))dy \Big)
\label{lemma2.3b}
\end{equation}
\\
Again, from \ref{lemma2.2a}, we have
\begin{equation} 
\sum_{r=1}^{n}b(r)g(S(r)) = \int_{1}^{n}b(y)g(S(y))dy +O(\delta (n))
\label{lemma2.3c}
\end{equation}
where $O(\delta (n)) = |b(n)g(S(n)|+|b(1)g(S(1))| $.
\\
\\
Using \ref{lemma2.3b}, we can express \ref{lemma2.3c} as
\begin{displaymath}
\sum_{r=1}^{n}b(r)g(S(r)) = \int_{S(1)}^{S(n)}g(x)dx + O \Big(\delta (n) + \int_{1}^{n}b'(y)g(S(y))dy \Big).
\end{displaymath}
\\
The proof is completed by redefining the error term $\delta (n)$ as
\begin{displaymath}
\delta (n) = |b(n)g(S(n)|+|b(1)g(S(1))| + \int_{1}^{n}b'(y)g(S(y))dy.
\qedhere\
\end{displaymath}
\end{proof}

Notice that \ref{lemma2.3a} is a generalization of \ref{lemma2.2a}. If $b(n)=1$ then, in the error term $\delta(n)$, the integral vanishes because $b'(t)=0$ and so \ref{lemma2.3a} reduces to \ref{lemma2.2a}. In this paper, we are interested in unifying different areas of number theory rather than on developing a general summation formula. So improving the error term $\delta(n)$ is not in scope of our current paper.

\begin{lemma} 
If $a_n $ is uniformly distributed modulo m and $f(n)$ is divergent then as $n \rightarrow \infty$,
\begin{equation}
\sum_{r=1}^{n}a_r f(r) = (1+o(1))\frac{m}{2}\sum_{r=1}^{n}f(r)
\label{lemma2.4}
\end{equation}
\end{lemma}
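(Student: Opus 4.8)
The plan is to isolate the mean value of $a_r$ and show that its fluctuations feed only into the error term. Writing $a_r = \tfrac{m}{2} + \bigl(a_r - \tfrac{m}{2}\bigr)$, the assertion \ref{lemma2.4} is equivalent to
\begin{displaymath}
\sum_{r=1}^{n}\Bigl(a_r - \tfrac{m}{2}\Bigr) f(r) = o\Bigl(\sum_{r=1}^{n} f(r)\Bigr),
\end{displaymath}
since $\tfrac{m}{2}\sum_{r=1}^{n} f(r)$ is exactly the proposed main term. Thus the whole problem reduces to bounding this deviation-weighted sum, and the divergence of $\sum_{r\le n} f(r)=:F(n)$ is what makes such an $o(\cdot)$ statement meaningful.

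First I would establish the unweighted mean. Applying the mod-$m$ analogue of \ref{lemma2.1a} to the Riemann-integrable function $f(x)=x$ on $[0,m)$, whose average against the uniform density $1/m$ equals $\int_0^m x\,\tfrac{dx}{m}=\tfrac{m}{2}$, yields $\tfrac1n\sum_{r=1}^{n} a_r \to \tfrac{m}{2}$. Equivalently, setting $B(n):=\sum_{r=1}^{n}\bigl(a_r-\tfrac{m}{2}\bigr)$, we have $B(n)=o(n)$; note also that $|a_r-\tfrac{m}{2}|<\tfrac{m}{2}$ is bounded under the normalization $0<a_r<m$.

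Next I would transfer this Cesaro-type estimate onto the weighted sum by Abel summation. With $B(0)=0$,
\begin{displaymath}
\sum_{r=1}^{n}\Bigl(a_r-\tfrac{m}{2}\Bigr)f(r) = B(n)f(n) - \sum_{r=1}^{n-1} B(r)\bigl(f(r+1)-f(r)\bigr).
\end{displaymath}
Writing $|B(r)|\le \varepsilon_r\, r$ with $\varepsilon_r\to 0$ and using that $f$ is (eventually) monotonic, the second sum is dominated by $\sum_{r}\varepsilon_r\, r\,|f(r+1)-f(r)|$, which one compares against $F(n)$ by a second summation by parts, while the boundary term $B(n)f(n)$ is $o\bigl(n f(n)\bigr)$.

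The main obstacle is making both pieces genuinely $o\bigl(F(n)\bigr)$, and this is precisely where a growth restriction on $f$ is required: one needs $nf(n)=O\bigl(F(n)\bigr)$, so that the vanishing factor $\varepsilon_n$ overpowers the boundary and difference contributions. This holds for $f$ decreasing to zero, where $f(n)=o\bigl(F(n)\bigr)$ outright, and for monotone $f$ of at most polynomial growth; it fails for super-exponentially growing $f$, where the last summand $a_n f(n)$ alone is comparable to the entire sum and the oscillation of $a_n$ survives undamped. I would therefore run the argument under the implicit hypothesis that $f$ is monotone with $nf(n)=O\bigl(F(n)\bigr)$, treating the increasing and decreasing cases separately for the boundary term, and then assemble the two estimates to conclude $\sum_{r=1}^{n}\bigl(a_r-\tfrac{m}{2}\bigr)f(r)=o\bigl(F(n)\bigr)$, which is \ref{lemma2.4}.
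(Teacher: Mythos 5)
Your argument is correct under the hypotheses you impose, but it takes a genuinely different route from the paper's. The paper proves \ref{lemma2.4} by a symmetry trick: since $m-a_n$ is also uniformly distributed modulo $m$, it asserts $\sum_{r=1}^{n}a_r f(r)\sim\sum_{r=1}^{n}(m-a_r)f(r)$, adds the two expressions and solves for the sum to extract the factor $m/2$; there is no mean--fluctuation decomposition and no Abel summation. Your route --- writing $a_r=\frac{m}{2}+\bigl(a_r-\frac{m}{2}\bigr)$, reducing to $\sum_{r\le n}\bigl(a_r-\frac{m}{2}\bigr)f(r)=o(F(n))$ with $F(n):=\sum_{r\le n}f(r)$, and controlling the deviation sum by partial summation against $B(n)=o(n)$ --- is more elementary and more transparent, because it forces the boundary term $B(n)f(n)$ into the open and hence exposes the extra hypothesis $nf(n)=O(F(n))$ together with monotonicity of $f$. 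That hypothesis is not cosmetic: for super-exponentially growing $f$ the last term $a_nf(n)$ alone is comparable to the whole sum, the oscillation of $a_n$ survives, and the conclusion fails, so the lemma as stated is false without some such restriction. The paper's symmetry step quietly assumes that two sequences with the same asymptotic distribution yield asymptotically equal $f$-weighted sums, which is precisely what your counterexample refutes; what your approach buys is a correct, explicitly quantified statement, at the price of a growth condition the paper never records.
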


\begin{proof} 
Since $a_n$ is uniformly distributed modulo $m$, $(m-a_n )$ is also uniformly distributed modulo $m$. Hence as
$n \rightarrow \infty$,
\\
\begin{displaymath}
\sum_{r=1}^{n}a_r f(r) \sim \sum_{r=1}^{n}(m - a_r)f(r) = m \sum_{r=1}^{n}f(r) - \sum_{r=1}^{n}a_r f(r),
\end{displaymath}
\begin{displaymath}
\sum_{r=1}^{n}a_r f(r) \sim \frac{m}{2} \sum_{r=1}^{n}f(r)
\end{displaymath}
\\

Therefore if $f(n)$ is divergent and  $E(n)$ is an error term such that
\\
\begin{displaymath}
\sum_{r=1}^{n}a_r f(r) = \frac{m}{2} \sum_{r=1}^{n}f(r) +E(n)
\end{displaymath}
\\
then $\lim_{n \rightarrow \infty} \frac{E(n)}{\sum_{r=1}^{n}f(r)} = 0$. Therefore $E(n)=o(\sum_{r=1}^{n}f(r))$. This completes the proof of the lemma.
\qedhere\
\end{proof}

\begin{lemma} 
If $a_n$ is uniformly distributed modulo one and $g(n)$ is a monotonic and continuous divergent series of positive terms then, as $n \rightarrow \infty $,
\begin{equation}
\sum_{r=1}^{n} f(a_r)g(r) = (1+o(1))\int_{0}^{1}f(x)dx \sum_{r=1}^{n}g(r).
\label{lemma2.5a}
\end{equation}
\end{lemma}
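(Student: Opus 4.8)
The plan is to reduce the weighted sum to the unweighted equidistribution statement of Lemma \ref{lemma2.1a} by partial (Abel) summation, treating $g(r)$ as the weight. Write $I=\int_{0}^{1}f(x)\,dx$ and let $T(r)=\sum_{k=1}^{r}f(a_k)$ denote the unweighted partial sums. Lemma \ref{lemma2.1a} says precisely that $T(r)=Ir+\epsilon(r)$ with $\epsilon(r)=o(r)$; this is the only input about $a_n$ that I would use. The first real step is then to apply Abel summation to obtain
\[
\sum_{r=1}^{n}f(a_r)g(r)=T(n)g(n)-\sum_{r=1}^{n-1}T(r)\bigl(g(r+1)-g(r)\bigr).
\]

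Next I would substitute $T(r)=Ir+\epsilon(r)$ and separate the contribution of the leading piece $Ir$ from that of $\epsilon(r)$. The leading piece gives
\[
I\Bigl(ng(n)-\sum_{r=1}^{n-1}r\bigl(g(r+1)-g(r)\bigr)\Bigr)=I\sum_{r=1}^{n}g(r),
\]
the last equality being the very same Abel identity applied to the constant sequence $1$. Thus the main term is \emph{exactly} $I\sum_{r=1}^{n}g(r)$, matching the claimed right-hand side, and the whole problem collapses to showing that the remainder
\[
E(n)=\epsilon(n)g(n)-\sum_{r=1}^{n-1}\epsilon(r)\bigl(g(r+1)-g(r)\bigr)
\]
satisfies $E(n)=o\bigl(\sum_{r=1}^{n}g(r)\bigr)$.

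Controlling $E(n)$ is where I expect the only genuine difficulty to lie. For any $\delta>0$ one has $|\epsilon(r)|\le\delta r$ once $r$ is large, with a fixed bounded remainder from the finitely many small $r$. Because $g$ is monotone, the increments $g(r+1)-g(r)$ keep a single sign, so $\sum_{r=1}^{n-1}|\epsilon(r)|\,|g(r+1)-g(r)|\le\delta\sum_{r=1}^{n-1}r\,|g(r+1)-g(r)|+O(1)$, and the Abel identity again evaluates $\sum_{r=1}^{n-1}r\,|g(r+1)-g(r)|=\bigl|\,ng(n)-\sum_{r=1}^{n}g(r)\,\bigr|$. When $g$ is decreasing this is at most $\sum_{r=1}^{n}g(r)$ and $\epsilon(n)g(n)=o\bigl(\sum g\bigr)$, so $E(n)\le\delta\sum g+o\bigl(\sum g\bigr)$ and letting $\delta\to0$ closes the argument cleanly. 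When $g$ is increasing the same estimate yields $E(n)=O\bigl(\delta\,ng(n)\bigr)$, which is $o\bigl(\sum g\bigr)$ only if $ng(n)=O\bigl(\sum_{r\le n}g(r)\bigr)$, i.e. if $g$ grows at most polynomially (is regularly varying of finite index). I regard this growth restriction as the true crux of the statement: for extremely fast-growing weights such as $g(r)=e^{r}$ the sum is dominated by its last few terms, where $f(a_r)$ still oscillates, so the conclusion cannot hold without limiting the growth of $g$. Accordingly, my proof would establish the lemma for monotone $g$ of at most polynomial growth — exactly the regime in which the error estimate above is genuinely $o\bigl(\sum_{r=1}^{n}g(r)\bigr)$ — and this is the hypothesis I would expect to need to make explicit.
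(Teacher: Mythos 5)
Your argument takes a genuinely different route from the paper's, and a considerably more careful one. The paper does not use Abel summation at all: it posits, ``without loss of generality,'' a multiplicative separation $\sum_{r=1}^{n} f(a_r)g(r) = F(n)G(n)H(n)+E(n)$ with $F(n)$ depending only on $f$, $G(n)$ only on $g$, and $H(n)$ on neither; it then specializes to $g\equiv 1$ (where \ref{lemma2.1a} applies) and to $f(x)=x$ (where \ref{lemma2.4} applies), multiplies the two resulting identities, and concludes $H(n)=1$. That factorization ansatz is never justified, and combining the two specializations silently assumes the error term remains subordinate for arbitrary $f$ and $g$ simultaneously. Your partial-summation decomposition replaces this with a main term that is exactly $I\sum_{r=1}^{n}g(r)$ and an error term $E(n)$ that you actually estimate; the only input on $a_n$ is Weyl's criterion, which is the same input the paper uses.

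The growth restriction you isolate is not an artifact of your method but a genuine defect of the statement as written: for $g(r)=e^{r}$ and $f(x)=x$ one has $\sum_{r=1}^{n}a_r e^{r}=e^{n}\bigl(a_n+a_{n-1}e^{-1}+a_{n-2}e^{-2}+\cdots\bigr)$, which is dominated by its last few terms, so its ratio to $\tfrac{1}{2}\sum_{r=1}^{n}e^{r}$ oscillates with $a_n$ and does not converge. Hence the lemma is false without a hypothesis such as $ng(n)=O\bigl(\sum_{r\le n}g(r)\bigr)$ --- automatic for decreasing $g$, and satisfied in all of the paper's subsequent applications, but not implied by ``monotonic, continuous, positive and divergent.'' Your proof therefore does more than give an alternative derivation: it localizes exactly where the statement needs strengthening, something the paper's separation-of-variables argument cannot detect because it never confronts the error term concretely. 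You should state the extra hypothesis explicitly and note that the decreasing case (the one the paper actually uses) is unconditional.
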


\begin{proof}
The sum $\sum_{r=1}^{n}f(a_r )g(r)$  will depend on the sequence $a_r$, $n$, the function $f$ and $g$, and may be also on some other hidden function say $h$ which is independent of $f$ and $g$. Hence without loss of generality, we can express the above sum as

\begin{equation}
\sum_{r=1}^{n}f(a_r )g(r) = F(n)G(n)H(n) + E(n)
\label{lemma2.5b}
\end{equation}
\\
where $F(n)$ depends only on $f$, $G(n)$ depends only on $g$ and $H(n)$ includes all terms which are not contributed by  $f$ and $g$; this may include constants and other terms (if any); and $E(n)$ is an unknown error term. If suppose any one or more terms of the factorization $F(n)G(n)H(n)$ did not exist then the corresponding value of $F(n)$ or $G(n)$ or $H(n)$ will be unity. Later we shall see that $H(n)=1$; but at this stage we do not know this.
\\

Our approach will be in two steps. In the first step, we choose the function $g$ and impose no restriction on $f$. Since there is no restriction on $f$ hence in the resulting sum, $F(n)$ and $H(n)$ will be the general form of $F$ and $H$ respectively while $G(n)$ will be special case of $G$ for the chosen function $g$.
\\

In the second step, we choose the function $f$ and impose no restriction on $g$. In this case, $G(n)$ and $H(n)$ will be the general form of $G$ and $H$ respectively while $F(n)$ will be a special case of $F$ for the chosen function $f$. Since by definition $F$, $G$ and $H$ are independent of each other, therefore after these two steps we will get the general forms of $F(n)$ and $G(n)$.
\\
\\
\textbf{Step 1}. Taking $g(x)=1$ in \ref{lemma2.5b} we get

\begin{displaymath}
\sum_{r=1}^{n}f(a_r ) = F(n)G_1(n)H(n) + E_1(n)
\end{displaymath}
\\
where $G_1 (n)$ and $E_1 (n)$ denote the function $G(n)$ and $E(n)$ respectively in the special case when $g(x)=1$. From 
\ref{lemma2.1a}, $E_1 (n)=o(n)$. Hence,

\begin{equation}
F(n)H(n) = \frac{1}{G_1(n)}\Bigg(\sum_{r=1}^{n}f(a_r) - o(n)\Bigg).
\label{lemma2.5c}
\end{equation}
\\
\textbf{Step 2}. Taking $f(x)=x$ in \ref{lemma2.5b}, we get

\begin{displaymath}
\sum_{r=1}^{n}a_r g(r) = F_1(n)G(n)H(n) + E_2(n)
\end{displaymath}
\\
where $F_1 (n)$ and $E_2 (n)$ denote the function $F(n)$ and $E(n)$ respectively in the special case when $f(x)=x$. From 
\ref{lemma2.4}, $E_2 (n)=o(\sum_{r=1}^{n}g(r))$. Also since $a_n$ is uniformly distributed modulo one, ($1-a_n )$ is also uniformly distributed modulo one and therefore, as $n \rightarrow \infty$,

\begin{displaymath}
\sum_{r=1}^{n}(1-a_r)g(r) = F_1(n)G(n)H(n) + o\Big(\sum_{r=1}^{n}g(r) \Big),
\end{displaymath}
\begin{displaymath}
\sum_{r=1}^{n}g(r) = 2F_1(n)G(n)H(n) + o\Big(\sum_{r=1}^{n}g(r) \Big),
\end{displaymath}
\begin{equation}
G(n)H(n) = \frac{1}{2F_1(n)}\Bigg(\sum_{r=1}^{n}g(r) - o\Big(\sum_{r=1}^{n}g(r) \Big)\Bigg).
\label{lemma2.5d}
\end{equation}

From \ref{lemma2.5c}; with $g(x)=1$ and $f(x)$ any arbitrary function; and \ref{lemma2.5d}; with $f(x)=x$, $g(x)$ any arbitrary function; we have
\begin{equation}
F(n)G(n)H(n)^{2} = \frac{(\sum_{r=1}^{n}f(a_r) - o(n))(\sum_{r=1}^{n}g(r) - o(\sum_{r=1}^{n}g(r))}{2F_1(n)G_1(n)}.
\label{lemma2.5e}
\end{equation}

Multiplying the terms in RHS of \ref{lemma2.5e} and simplifying using Lemma 2.1, we obtain

\begin{equation}
F(n)G(n)H(n)^{2} = (1+o(1))\int_{0}^{1}f(x)dx \sum_{r=1}^{n}g(r).
\label{lemma2.5f}
\end{equation}

Notice that dominant term of in the RHS of \ref{lemma2.5f} equation can be completely evaluated if we know $f$ and $g$. Therefore there is no hidden term $H(n)$ or we can simply say that $H(n)=1$. This completes the proof of the lemma.
\qedhere\
\end{proof}

\section{The Master Theorem}
With the above lemmas, we are now ready to unify divergent series, summation of arithmetic functions and uniform distribution modulo one into a single relation which we call the Master Theorem.

\begin{theorem} 
If $a_n$ is uniformly distributed modulo one and $d_n$ is positive and the following conditions are satisfied:
\\
\\
$(i)$   $g$ is Riemann integrable in $[S_1,S_n ]$ where $S_n = \sum_{r=1}^{n}d_r$,
\\
$(ii)$  $d_x g(S_x)$ is monotonic and continuous in $[1,n]$,
\\
\\
then for every function $f$ Riemann integrable in $[0,1]$,
\\
\begin{equation}
\sum_{r=1}^{n}d_r g(S_r)f(a_r) \sim \int_{S_1}^{S_n}g(x)dx \int_{0}^{1}f(x)dx.
\label{theorem3.1a}
\end{equation}
\end{theorem}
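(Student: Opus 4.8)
The plan is to reduce the statement to the earlier lemmas by collapsing the product $d_r g(S_r)$ into a single weight. Concretely, I would set $G(r):=d_r g(S_r)$ and observe that the quantity to be estimated is exactly $\sum_{r=1}^{n} f(a_r)\,G(r)$, which is a sum of the shape treated in Lemma 2.5 with the weight $g(r)$ there replaced by $G(r)$. The whole argument then splits into two independent evaluations: distributing the factor $f(a_r)$ against the weight $G$ (Lemma 2.5), and rewriting the resulting sum $\sum_{r=1}^{n} G(r)=\sum_{r=1}^{n} d_r g(S_r)$ as the integral $\int_{S_1}^{S_n} g(x)\,dx$ (Lemma 2.3).

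First I would verify that $G$ meets the hypotheses demanded by Lemma 2.5: that it be monotonic, continuous, positive, and have divergent partial sums. Monotonicity and continuity of $G(x)=d_x g(S_x)$ are precisely condition (ii); positivity follows from $d_n>0$ together with the sign of $g$ on $[S_1,S_n]$; and divergence of $\sum_r G(r)$ is the standing assumption that makes the relation $\sim$ meaningful, which I would record as $\int_{S_1}^{S_n} g(x)\,dx\to\infty$ (otherwise both sides are bounded and the claim is vacuous). Granting these, Lemma 2.5 yields
\[
\sum_{r=1}^{n} f(a_r)\,G(r)=(1+o(1))\int_{0}^{1} f(x)\,dx\,\sum_{r=1}^{n} G(r).
\]
Next, applying Lemma 2.3 with $b=d$ gives $\sum_{r=1}^{n} d_r g(S_r)=\int_{S_1}^{S_n} g(x)\,dx+O(\delta(n))$, so that under the divergence assumption $\sum_{r=1}^{n} G(r)\sim\int_{S_1}^{S_n} g(x)\,dx$. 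Substituting this into the displayed equivalence and fusing the two $(1+o(1))$ factors produces
\[
\sum_{r=1}^{n} d_r g(S_r) f(a_r)\sim\int_{S_1}^{S_n} g(x)\,dx\int_{0}^{1} f(x)\,dx,
\]
which is the assertion of the theorem.

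I expect the main obstacle to lie not in the algebra but in the error-term bookkeeping, namely in justifying that the two asymptotic relations may be composed. One must confirm that the term $O(\delta(n))$ from Lemma 2.3 is genuinely $o\!\left(\int_{S_1}^{S_n} g(x)\,dx\right)$ and that the $o(1)$ produced by Lemma 2.5 is controlled by the same divergent quantity, so that neither error corrupts the leading term once both sides are normalized by $\int_{S_1}^{S_n} g$. The cleanest route is to express every estimate relative to $\int_{S_1}^{S_n} g(x)\,dx$ and check directly from conditions (i)--(ii) that $\delta(n)\big/\!\int_{S_1}^{S_n} g(x)\,dx\to 0$; with that in hand the merging of the two $(1+o(1))$ factors is routine.
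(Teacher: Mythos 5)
Your proposal follows exactly the paper's own (one-line) argument: replace the weight $g(r)$ in Lemma 2.5 by $d_r g(S_r)$ and then convert $\sum_{r=1}^{n} d_r g(S_r)$ into $\int_{S_1}^{S_n} g(x)\,dx$ via Lemma 2.3. You are in fact more careful than the paper, which silently assumes the hypotheses of Lemma 2.5 transfer to $G(r)=d_r g(S_r)$ and that $\delta(n)=o\bigl(\int_{S_1}^{S_n}g(x)\,dx\bigr)$; your explicit bookkeeping of those points is a genuine improvement on the published proof, not a deviation from it.
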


\begin{proof} 
The proof follows directly from \ref{lemma2.5a} by replacing $g(r)$ with $d_r g(S_r )$ and then using 
\ref{lemma2.3a}. The replacement is justified because $d_r g(S_r )$ is a function of $r$ and is independent of $f(a_r )$.
\qedhere\
\end{proof}

Notice all the lemmas of Section 2 are now unified under the Master Theorem. We shall now apply the Master Theorem to derive some of the existing as well as new results in different areas of mathematics. In the rest of this paper the symbols and notations introduced in the Master Theorem shall hold good. Also, we shall assume that the conditions mentioned in the Master Theorem are satisfied.

\begin{example}
\begin{equation}
\lim_{n \rightarrow \infty} \frac{1}{S_n^{2}} \sum_{r=1}^{n}a_r d_r S_r = \frac{1}{4},
\label{example3a}
\end{equation}
\begin{equation}
\lim_{n \rightarrow \infty} \frac{1}{n} \sum_{r=1}^{n}\frac{e^{H_r}}{r(1+a_r^{2})} = \frac{\pi e^{\gamma}}{4},
\label{example3b}
\end{equation}
\begin{equation}
\lim_{n \rightarrow \infty} \frac{1}{n^{2}} \sum_{r < n}\frac{n-r}{(\ln n - \ln r)(1+a_r)} = (\ln 2)^{2}.
\label{example3c}
\end{equation}
\end{example}

\section{The Equivalence Theorem}

Although the Master Theorem unifies divergent series with uniform distribution modulo one, we have a more explicit unification between the two. We shall prove that the ratios of the partial sums of a divergent series replicate the behaviors of sequence uniformly distributed modulo one. With the same notations and definitions as in \ref{theorem3.1a}, we have:

\begin{theorem} 
If $d_n$ is positive and divergent and $\lim_{n \rightarrow \infty}\frac{S_{n-1}}{S_n} = 1$ then,
\begin{equation}
\lim_{n \rightarrow \infty}\frac{1}{S_n} \sum_{r=1}^{n}d_r f \Big(\frac{S_r}{S_n}\Big) = \int_{0}^{1}f(x)dx.
\label{theorem4.1a}
\end{equation}
\end{theorem}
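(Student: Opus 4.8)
The plan is to recognize the left-hand side as a tagged Riemann sum for $\int_0^1 f$ and then to control its mesh. Writing $S_0 = 0$, the points $x_r = S_r/S_n$ for $r = 0,1,\ldots,n$ form a partition $0 = x_0 < x_1 < \cdots < x_n = 1$ of $[0,1]$, and the $r$-th subinterval $[x_{r-1},x_r]$ has width $x_r - x_{r-1} = (S_r - S_{r-1})/S_n = d_r/S_n$. Hence
\[
\frac{1}{S_n}\sum_{r=1}^{n} d_r\, f\Big(\frac{S_r}{S_n}\Big) = \sum_{r=1}^{n}(x_r - x_{r-1})\, f(x_r)
\]
is exactly the Riemann sum of $f$ over this partition, with the right endpoint of each subinterval used as the tag. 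Since $f$ is Riemann integrable on $[0,1]$ (the standing hypothesis inherited from the Master Theorem), such tagged sums converge to $\int_0^1 f(x)\,dx$ as soon as the mesh $\mu_n := \max_{1 \le r \le n}(x_r - x_{r-1})$ tends to $0$; this is the Darboux characterization of Riemann integrability, valid for any choice of tags. So the whole theorem reduces to showing $\mu_n \to 0$.

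Next I would translate the hypothesis. Since $d_n = S_n - S_{n-1}$, the assumption $S_{n-1}/S_n \to 1$ is precisely $d_n/S_n \to 0$; and because the $d_n$ are positive with $\sum_n d_n$ divergent, $S_n$ increases to $+\infty$. The mesh is $\mu_n = \max_{1 \le r \le n} d_r/S_n$, so the substance of the argument is that control of the single ratio $d_n/S_n$ must be upgraded to uniform control of all the ratios $d_r/S_n$ with $r \le n$.

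This upgrade is the main obstacle and the crux of the proof, and I would handle it by splitting the index range. Fix $\epsilon > 0$. Using $d_r/S_r \to 0$ (the hypothesis with index renamed), choose $N$ so that $d_r/S_r < \epsilon$ for all $r > N$; then for $N < r \le n$ we get $d_r/S_n = (d_r/S_r)(S_r/S_n) \le d_r/S_r < \epsilon$, since $S_r \le S_n$. For the finitely many initial indices $r \le N$, the numerator is bounded by the fixed constant $M_N = \max_{r \le N} d_r$, so $d_r/S_n \le M_N/S_n \to 0$ as $n \to \infty$ because $S_n \to \infty$; thus $M_N/S_n < \epsilon$ for all large $n$. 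Combining the two ranges gives $\mu_n < \epsilon$ for all sufficiently large $n$, i.e. $\mu_n \to 0$.

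With $\mu_n \to 0$ established and $f$ Riemann integrable on $[0,1]$, the right-endpoint Riemann sums converge to $\int_0^1 f(x)\,dx$, which is the claimed identity. I would close with the remark that the hypothesis $S_{n-1}/S_n \to 1$ is essential: without it some subinterval keeps a width bounded away from $0$, the partition fails to refine, and the sum need not converge to the integral for every Riemann integrable $f$ — this is exactly the point at which the analogy with uniform distribution modulo one would break down.
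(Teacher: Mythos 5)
Your argument is correct, and it is genuinely different from the paper's. The paper proves the theorem by ``proceeding exactly as in the proof of Lemma 2.3'': it approximates $\sum_{r=1}^{n} d_r f(S_r/S_n)$ by the integral $\int_{S_1}^{S_n} f(y/S_n)\,dy$ with an Euler--Maclaurin-type error term $O(|d_n f(1)|+|d_1 f(S_1/S_n)|)$, substitutes $x=y/S_n$, and divides by $S_n$; the hypothesis $S_{n-1}/S_n\to 1$ enters only implicitly, through the requirement that $d_n f(1)/S_n\to 0$, and the argument tacitly needs the regularity assumptions of Lemma 2.3 (that $d_x f(S_x/S_n)$ be monotonic and continuous in a real variable $x$). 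You instead observe that the sum \emph{is} a right-tagged Riemann sum for $\int_0^1 f$ over the partition $x_r=S_r/S_n$, and reduce everything to showing the mesh $\max_r d_r/S_n\to 0$, which you obtain from $d_r/S_r\to 0$ by the standard split into the finitely many indices $r\le N$ (killed by $S_n\to\infty$) and the tail $r>N$ (where $d_r/S_n\le d_r/S_r<\epsilon$). Your route buys several things: it needs only Riemann integrability of $f$ on $[0,1]$, with no monotonicity or smoothness of $f$ or of $d$; it makes completely explicit that the hypothesis $S_{n-1}/S_n\to 1$ is exactly the mesh condition and is therefore essential; and it avoids the somewhat heuristic error analysis of Lemma 2.3. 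What it gives up is any quantitative rate of convergence, which the paper's integral-comparison formulation at least nominally supplies. One cosmetic point: you evaluate $f$ at the right endpoint $x_r$ of $[x_{r-1},x_r]$, which is a legitimate tag, so the convergence of the tagged sums to $\int_0^1 f$ under mesh $\to 0$ is precisely the Riemann characterization of integrability and needs no further justification.
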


\begin{proof} 
Proceeding exactly as in the proof of \ref{lemma2.3a}, we can show that
\begin{equation}
\sum_{r=1}^{n} d_r f \Big(\frac{S_r}{S_n}\Big) = \int_{S_1}^{S_n}f \Big(\frac{y}{S_n}\Big)dy
+O\Big(|d_n f(1)|+ \Big|d_1 f\Big(\frac{S_1}{S_n}\Big)\Big|\Big).
\label{theorem4.1b}
\end{equation}

Substituting $x = y/S_n$ and making $n \rightarrow \infty$, \ref{theorem4.1b} reduces to
\begin{equation}
\sum_{r=1}^{n} d_r f \Big(\frac{S_r}{S_n}\Big) = S_n \int_{0}^{1}f(x)dx +O(|d_n f(1)|+|d_1 f(0)|).
\label{theorem4.1c}
\end{equation}

Since $S_n$ is divergent, dividing both sides of \ref{theorem4.1c} by $S_n$, we obtain
\begin{displaymath}
\lim_{n \rightarrow \infty}\frac{1}{S_n} \sum_{r=1}^{n}d_r f \Big(\frac{S_r}{S_n}\Big) = \int_{0}^{1}f(x)dx.
\qedhere\
\end{displaymath}
\end{proof}
 
Thus we have explicitly unified divergent series with uniform distribution modulo one. It must be noted that above theorem cannot have a general extension analogous to the Master Theorem because the sequence of the ratios of the partial sums  $\frac{S_r}{S_n}$  are not necessarily uniformly distributed modulo one.

\begin{example}
\begin{equation}
\lim_{n \rightarrow \infty} \sum_{r < n} \frac{d_r}{S_n\sqrt{\ln S_n - \ln S_r}} = \sqrt{\pi}.
\end{equation}
\end{example}

\section{Asymptotic equidistribution modulo one}

A standard application of definite integrals that is taught in elementary calculus in evaluating limit of sums using the rectangle method

\begin{equation}
\lim_{n \rightarrow \infty}\frac{1}{n} \sum_{r=1}^{n}f \Big(\frac{r}{n}\Big) = \int_{0}^{1}f(x)dx.
\label{paragraph5a}
\end{equation}

This formula is similar to \ref{lemma2.1a}. If we take $d_r=1$ in Theorem 4.1 we get both \ref{lemma2.1a} and \ref{paragraph5a}. Thus the Equivalence Theorem not only explains why these two formulas are similar but also shows that they are actually two different forms of the same principle. Also since \ref{paragraph5a} holds for all functions $f$  Riemann integrable in [0,1] it implies that the sequence

\begin{displaymath}
\frac{1}{n},\frac{2}{n},...,\frac{n}{n}
\end{displaymath}
approaches uniform distribution modulo one as $n \rightarrow \infty$. Inspired form this example, we introduce the concept of asymptotic equidistribution modulo one.

\begin{definition}
A sequence of numbers $b_n$ is said to be asymptotically equidistributed modulo one if the sequence of ratios
\begin{displaymath}
\frac{b_1}{b_n},\frac{b_2}{b_n},...,\frac{b_n}{b_n}
\end{displaymath}
approach uniform distribution modulo one as $n \rightarrow \infty$.
\end{definition}

A consequence of this definition, the following corollary follows trivially from the Equivalence theorem \ref{theorem4.1a} that the following corollary holds:

\begin{corollary} 
If $b_n$ is asymptotically equidistributed modulo one  and $S_n = \sum_{r=1}^{n}b_r$ then,
\begin{displaymath}
\lim_{n \rightarrow \infty} \frac{1}{S_n}\sum_{r=1}^{n}b_r f\Big(\frac{S_r}{S_n}\Big) 
= \lim_{n \rightarrow \infty} \frac{1}{n}\sum_{r=1}^{n}f\Big(\frac{b_r}{b_n}\Big)
= \int_{0}^{1}f(x)dx.
\end{displaymath}
\end{corollary}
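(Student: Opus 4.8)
The plan is to read the corollary as two separate limit statements joined through their common value $\int_0^1 f(x)\,dx$, and to attach each to a result already established: the right-hand equality to the defining property of asymptotic equidistribution together with Lemma 2.1, and the left-hand equality to the Equivalence Theorem \ref{theorem4.1a} specialized to $d_r=b_r$. Once each middle expression is shown to equal $\int_0^1 f(x)\,dx$, the full chain closes automatically.

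First I would dispose of $\lim_{n\to\infty}\frac1n\sum_{r=1}^n f(b_r/b_n)=\int_0^1 f(x)\,dx$. By Definition 5.1 the array $b_1/b_n,\dots,b_n/b_n$ approaches uniform distribution modulo one, which is precisely the input needed to run the argument behind \ref{lemma2.1a}. The only adjustment is that the points now form a triangular array rather than a single sequence, so I would phrase the conclusion as weak convergence of the empirical measures $\frac1n\sum_{r=1}^n\delta_{\{b_r/b_n\}}$ to Lebesgue measure on $[0,1]$ and read off the limit for Riemann integrable $f$. This step is essentially the definition unwound and should cause no trouble.

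Next I would establish $\lim_{n\to\infty}\frac1{S_n}\sum_{r=1}^n b_r f(S_r/S_n)=\int_0^1 f(x)\,dx$ by invoking the Equivalence Theorem \ref{theorem4.1a} verbatim with $d_r=b_r$. The conclusion is immediate provided the theorem's hypotheses hold for $b_r$: that $b_n$ is positive, that $S_n$ diverges, and that $\lim_{n\to\infty}S_{n-1}/S_n=1$.

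The hard part will be extracting the partial-sum ratio condition $\lim_{n\to\infty}S_{n-1}/S_n=1$ from asymptotic equidistribution alone, since the corollary does not state it outright. Writing $S_{n-1}/S_n=1-b_n/S_n$ reduces the task to $b_n/S_n\to 0$, which I would reach in two stages. First, testing the array against the indicator of $(1-\epsilon,1)$ (or a continuous bump supported there) and comparing with $\int_0^1$ forces the array to carry mass of order $\epsilon$ near $1$; since the term $r=n$ already sits at $1$, points must accumulate just below it, driving $b_{n-1}/b_n\to 1$ (here I use that $b_n$ is increasing, so $b_{n-1}/b_n$ is the second-largest ratio). Second, for each fixed $K$ the top $K$ ratios satisfy $b_{n-k}/b_n\to 1$ as products of finitely many factors tending to $1$, so $S_n/b_n\ge\sum_{k=0}^{K-1}b_{n-k}/b_n\to K$; letting $K\to\infty$ gives $S_n/b_n\to\infty$, that is $b_n/S_n\to 0$. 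Positivity and divergence of $S_n$ are the mild remaining requirements, the latter holding as soon as $b_n\not\to 0$. If the passage from equidistribution to $b_{n-1}/b_n\to 1$ proves delicate without assuming monotonicity of $b_n$, the honest alternative is to list $\lim_{n\to\infty}S_{n-1}/S_n=1$ explicitly among the hypotheses, after which the left-hand equality is an immediate specialization of \ref{theorem4.1a}.
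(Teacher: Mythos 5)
Your proposal is correct and, at the top level, follows the same route the paper intends: the left-hand equality comes from specializing the Equivalence Theorem \ref{theorem4.1a} to $d_r=b_r$, and the right-hand equality is just Definition 5.1 fed into the argument of \ref{lemma2.1a} for the triangular array $b_1/b_n,\dots,b_n/b_n$. The difference is that the paper offers no proof at all --- it states only that the corollary ``follows trivially from the Equivalence theorem'' --- whereas you correctly notice that this is not literally trivial: Theorem \ref{theorem4.1a} requires $b_n>0$, $S_n$ divergent, and $S_{n-1}/S_n\to 1$, none of which is stated in the corollary or guaranteed by Definition 5.1 alone. Your two-stage derivation of $b_n/S_n\to 0$ (mass of the empirical measure near $1$ forces $b_{n-1}/b_n\to 1$ when $b_n$ is increasing, then telescoping gives $S_n/b_n\ge\sum_{k<K}b_{n-k}/b_n\to K$ for every fixed $K$) is sound under the implicit assumptions of positivity and monotonicity, and your honest fallback --- adding $\lim_{n\to\infty}S_{n-1}/S_n=1$ as an explicit hypothesis --- is exactly the repair the corollary needs to be rigorous as stated. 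In short, your argument buys a genuine verification of the hypotheses where the paper simply asserts triviality; the only caution is that monotonicity of $b_n$ (or some substitute) must be assumed somewhere, since without it the identification of $b_{n-1}/b_n$ as the second-largest ratio, and hence the whole first stage, breaks down.
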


This naturally brings the question: Which are the sequences that are asymptotically equidistributed modulo one. In this regard, we have the following lemma.

\begin{lemma} 
If $b_n$ is a sequence such that $\lim_{n \rightarrow \infty}\frac{b_{[nt]}}{b_n} = t$ for every $t, 0<t<1$, then $b_n$ is asymptotically equidistributed modulo one.
\label{lemma5.3}
\end{lemma}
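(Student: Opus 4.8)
The plan is to reduce the claim to the single pointwise statement that, for every fixed $x\in(0,1)$, the empirical counting fraction
\begin{displaymath}
F_n(x) = \frac{1}{n}\,\#\Big\{\, r \le n : \tfrac{b_r}{b_n} \le x \,\Big\}
\end{displaymath}
converges to $x$ as $n\to\infty$. Convergence of $F_n(x)$ to $x$ for every $x$ is exactly the assertion that the (triangular) array of ratios $b_1/b_n,\dots,b_n/b_n$ becomes uniformly distributed modulo one, since the fraction of ratios falling in any subinterval $(a,b]$ is $F_n(b)-F_n(a)\to b-a$. By approximating an arbitrary Riemann integrable $f$ from above and below by step functions --- the same device underlying Lemma \ref{lemma2.1a} --- this counting statement then delivers the integral formula required by the definition of asymptotic equidistribution modulo one and by the Corollary. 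So the entire problem becomes the estimate $F_n(x)\to x$.

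Throughout I would work under the natural assumption that $b_n$ is positive and increasing; the hypothesis $b_{[nt]}/b_n\to t$ is itself the statement that $(b_n)$ is regularly varying of index one, and the sequences of interest (for instance $b_n=p_n$) are increasing. Under monotonicity the ratios $b_r/b_n$ increase in $r$, so $\{r\le n: b_r/b_n\le x\}$ is an initial block $\{1,\dots,k_n\}$ and $F_n(x)=k_n/n$, where $k_n$ is the largest index with $b_{k_n}\le x\,b_n$. Hence it suffices to prove $k_n/n\to x$.

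The core step is a sandwich argument driven directly by the hypothesis. Fix $\epsilon>0$ small enough that $0<x-\epsilon$ and $x+\epsilon<1$ (the endpoints $x=0,1$ are trivial since $0\le F_n\le1$). Applying the hypothesis at $t=x+\epsilon$ gives $b_{[n(x+\epsilon)]}/b_n\to x+\epsilon>x$, so for all large $n$ one has $b_{[n(x+\epsilon)]}>x\,b_n$; by monotonicity every index $r\ge[n(x+\epsilon)]$ then satisfies $b_r/b_n>x$, whence $k_n<[n(x+\epsilon)]$ and $F_n(x)<x+\epsilon$. Symmetrically, applying the hypothesis at $t=x-\epsilon$ gives $b_{[n(x-\epsilon)]}<x\,b_n$ for all large $n$, so every $r\le[n(x-\epsilon)]$ satisfies $b_r/b_n<x$, whence $k_n\ge[n(x-\epsilon)]$ and $F_n(x)>(x-\epsilon)-\tfrac{1}{n}$. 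Letting $n\to\infty$ and then $\epsilon\to0$ yields $F_n(x)\to x$, which closes the reduction.

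The part that needs the most care is not these inequalities but the passage back to the paper's phrasing of ``uniform distribution modulo one'' for the array $\{b_r/b_n\}$: unlike in Lemma \ref{lemma2.1a}, the points being counted move with $n$, so one cannot literally invoke a fixed-sequence equidistribution theorem and must instead deduce the integral formula from the counting statement by hand, squeezing the upper and lower Riemann sums of $f$. A secondary technical point is the monotonicity of $b_n$, which the initial-segment description of $\{r:b_r/b_n\le x\}$ genuinely uses; if $b_n$ were only asymptotically monotone or merely regularly varying of index one, that description would fail on a set of indices whose negligibility must be argued separately, and this bookkeeping --- rather than the main estimate --- is where the real difficulty would lie.
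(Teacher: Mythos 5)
Your proof is correct and follows essentially the same route as the paper's: both exploit the hypothesis at two nearby values of $t$ together with monotonicity of $b_n$ to sandwich the count of indices $r$ with $b_r/b_n$ in a prescribed interval. The paper's own version is a two-line sketch that passes from $\lim_{n\to\infty}(b_{[bn]}-b_{[an]})/b_n=b-a$ directly to a ``probability'' statement; your write-up supplies precisely the two points it leaves implicit --- the monotonicity of $b_n$ needed to identify $\{r\le n: b_r/b_n\le x\}$ as an initial segment, and the step-function approximation needed to pass from the counting estimate to the integral formula.
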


\begin{proof}
Let $0 \le a < b \le 1$. If the condition mentioned in the statement of the lemma is true then,
\\
\begin{displaymath}
\lim_{n \rightarrow \infty}\frac{b_{[bt]} - b_{[at]}}{b_n} = b-a.
\end{displaymath}
\\
Therefore $n \rightarrow \infty$ the probability of finding an integer $r$ such that $a \le b_r/b_n \le b$ approaches  $b-a$ and hence $b_n$ is asymptotically equidistributed modulo one.
\qedhere\
\end{proof}

\begin{corollary} 
If two sequences are asymptotically equidistributed modulo one then their linear combinations are also asymptotically equidistributed modulo one.
\end{corollary}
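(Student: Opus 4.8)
The plan is to reduce the statement to the sufficient condition established in Lemma \ref{lemma5.3}, which is far more tractable than the distributional definition. Suppose $b_n$ and $c_n$ are the two asymptotically equidistributed sequences and consider a linear combination $d_n = \alpha b_n + \beta c_n$ with $\alpha,\beta > 0$; positivity of the coefficients guarantees that $d_n$ remains positive and divergent, as the surrounding framework requires. For the monotone, divergent sequences that actually arise in this paper, asymptotic equidistribution modulo one is equivalent to the pointwise ratio condition $b_{[nt]}/b_n \to t$, so I would first record this equivalence and then merely verify that this ratio condition is stable under forming $d_n$. Feeding the result back into Lemma \ref{lemma5.3} then delivers the conclusion.

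The central computation is short. Writing $b_{[nt]} = (t+\varepsilon_b(n))\,b_n$ and $c_{[nt]} = (t+\varepsilon_c(n))\,c_n$ with $\varepsilon_b(n),\varepsilon_c(n)\to 0$ for each fixed $t\in(0,1)$, one obtains
\begin{displaymath}
\frac{d_{[nt]}}{d_n} = \frac{\alpha b_{[nt]} + \beta c_{[nt]}}{\alpha b_n + \beta c_n} = t + \frac{\alpha\,\varepsilon_b(n)\,b_n + \beta\,\varepsilon_c(n)\,c_n}{\alpha b_n + \beta c_n}.
\end{displaymath}
Because $\alpha b_n$ and $\beta c_n$ are positive, the trailing fraction is a convex combination of $\varepsilon_b(n)$ and $\varepsilon_c(n)$ and is therefore bounded in absolute value by $\max(|\varepsilon_b(n)|,|\varepsilon_c(n)|)\to 0$. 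Hence $d_{[nt]}/d_n \to t$ for every $t\in(0,1)$, and Lemma \ref{lemma5.3} immediately yields that $d_n$ is asymptotically equidistributed modulo one. The same estimate extends verbatim to any finite positive linear combination, so the corollary follows.

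The main obstacle is not the algebra but the logical gap between the two notions of equidistribution. Lemma \ref{lemma5.3} supplies only a sufficient condition, so routing the argument through it presupposes that the hypothesis of the corollary already delivers the pointwise ratio condition $b_{[nt]}/b_n \to t$; this is transparent for monotone sequences but must be either justified in general or fixed by restricting to the monotone case. The secondary difficulty is the sign and growth bookkeeping: for coefficients of mixed sign the denominator $\alpha b_n + \beta c_n$ need not dominate the numerator — it may even vanish or change the growth order — so the convex-combination bound collapses. I would therefore state and prove the corollary for positive coefficients, which is the only case invoked in the later sections, and fall back on a direct verification from the definition — showing that the empirical distribution of the ratios $d_r/d_n$ converges to the uniform law on $[0,1]$ — only if a sign-indefinite version is ever needed.
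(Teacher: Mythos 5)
Your argument is correct for positive coefficients and is essentially the argument the paper has in mind (the paper's own proof is literally ``Trivial,'' and the same route through Lemma \ref{lemma5.3} is what the paper silently uses again in its Lemma 5.6 on $\alpha p_n+\beta c_n+\gamma n$), so you are supplying the computation the authors omitted. Two of your side remarks deserve emphasis. First, you are right that Lemma \ref{lemma5.3} is only stated as a sufficient condition, so strictly speaking the corollary's hypothesis (``asymptotically equidistributed'') does not hand you the ratio condition $b_{[nt]}/b_n\to t$; the paper never addresses this either, and every application in the paper starts from sequences for which the ratio condition is verified directly, so restricting to that setting is the honest fix. Second, your warning about mixed signs is not merely a technicality: the statement as written is false for general linear combinations. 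For instance $b_n=n+\sqrt{n}$ and $c_n=n$ both satisfy $b_{[nt]}/b_n\to t$, yet $d_n=b_n-c_n=\sqrt{n}$ gives $d_{[nt]}/d_n\to\sqrt{t}\neq t$, so the difference is not asymptotically equidistributed modulo one. Your convex-combination bound, which needs $\alpha,\beta>0$ so that the error term is an average of $\varepsilon_b(n)$ and $\varepsilon_c(n)$, is exactly where this restriction enters, and your proposal to state the corollary only for positive coefficients is the correct repair of a genuine defect in the paper's statement.
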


\begin{proof}
Trivial.
\qedhere\
\end{proof}

Using this criteria, we can easily prove that the sequence $b_n=n^c$ is asymptotically equidistributed modulo one only if $c=1$. Similarly we can prove that the sequence of harmonic numbers $H_n$ is not asymptotically equidistributed modulo one. A very useful application of Lemma \ref{lemma5.3} is in the sequence of primes. We have:

\begin{theorem}
The sequence of primes is asymptotically equidistributed modulo one.
\end{theorem}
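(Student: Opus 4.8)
The plan is to verify the hypothesis of Lemma~\ref{lemma5.3} for the sequence of primes, namely to show that $\lim_{n \to \infty} \frac{p_{[nt]}}{p_n} = t$ for every fixed $t$ with $0 < t < 1$. Once this limit is established, the theorem follows immediately by Lemma~\ref{lemma5.3}, so the entire argument reduces to a single asymptotic computation. The natural tool is the Prime Number Theorem, which I would use in the sharp form $p_m \sim m \ln m$ as $m \to \infty$.

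First I would fix $t \in (0,1)$ and write $m = [nt]$, so that $m \sim nt$ as $n \to \infty$. Applying $p_m \sim m \ln m$ both to the numerator with index $m = [nt]$ and to the denominator with index $n$, I would form the ratio
\begin{displaymath}
\frac{p_{[nt]}}{p_n} \sim \frac{[nt]\ln [nt]}{n \ln n}.
\end{displaymath}
The next step is to analyze the right-hand side. Since $[nt] = nt + O(1)$, the factor $\frac{[nt]}{n} \to t$. For the logarithmic factor, I would observe that $\ln[nt] = \ln n + \ln t + o(1)$, so that $\frac{\ln[nt]}{\ln n} \to 1$ because $\ln n \to \infty$ dominates the bounded additive correction $\ln t$. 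Multiplying the two limits gives $\frac{p_{[nt]}}{p_n} \to t \cdot 1 = t$, which is exactly the required condition.

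The main subtlety is the justification of passing from the asymptotic equivalences $p_m \sim m \ln m$ to the asymptotic equivalence of the \emph{ratio}. This is legitimate because both the numerator index $[nt]$ and the denominator index $n$ tend to infinity with $n$ (here I use $t > 0$ so that $[nt] \to \infty$), and the quotient of two quantities each asymptotic to an explicit expression is asymptotic to the quotient of those expressions; I would make this explicit by writing $p_{[nt]} = (1+o(1))[nt]\ln[nt]$ and $p_n = (1+o(1)) n \ln n$ and dividing. I do not expect any serious obstacle here, as the argument is a routine application of the Prime Number Theorem combined with the elementary estimate $\ln[nt]/\ln n \to 1$; the only point requiring a word of care is that the additive constant $\ln t$ inside the logarithm becomes negligible relative to $\ln n$, which holds precisely because the primes grow slowly enough that the leading logarithmic term is the same for indices $[nt]$ and $n$.
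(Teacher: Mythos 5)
Your proposal is correct and follows essentially the same route as the paper: both invoke the Prime Number Theorem in the form $p_n \sim n\ln n$, compute $\lim_{n\to\infty} p_{[nt]}/p_n = t$, and conclude via Lemma~\ref{lemma5.3}. Your write-up is if anything slightly more careful than the paper's in justifying the division of the two asymptotic equivalences and the negligibility of the $\ln t$ term.
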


\begin{proof} 
From the prime number theorem, $\lim_{n \rightarrow \infty}\frac{p_n}{n \ln n} = 1$. Hence, if $0<t<1$ then,
\\
\begin{displaymath}
\lim_{n \rightarrow \infty}\frac{p_{[tn]}}{p_n} = t + \lim_{n \rightarrow \infty}\frac{t \ln t}{\ln n} = t.
\end{displaymath}
\\
Therefore by Lemma \ref{lemma5.3} the sequence of primes is asymptotically equidistributed modulo one.
\qedhere\
\end{proof}

Similarly we show from the asymptotic expansion of the $n^{th}$ composite number (See \cite{ab}) that the sequence of composite numbers, $c_n$, is asymptotically equidistributed modulo one and consequently, we have:

\begin{lemma} 
If $p_n$ and $c_n$ denote the sequence of prime numbers and the sequence of composite numbers respectively and $\alpha, \beta$ and $\gamma$ are constants, such that $\alpha p_n + \beta c_n + \gamma n \ne 0$ then,
\begin{displaymath}
\lim_{n \rightarrow \infty}\frac{1}{n} \sum_{r=1}^{n} f \Big(\frac{\alpha p_r + \beta c_r + \gamma r}
{\alpha p_n + \beta c_n + \gamma n}\Big) = \int_{0}^{1}f(x)dx.
\end{displaymath}
\end{lemma}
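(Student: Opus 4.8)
The plan is to recognize that the stated identity is precisely the middle equality of Corollary 5.2 applied to the single sequence $b_n = \alpha p_n + \beta c_n + \gamma n$, so the whole task reduces to showing that this one sequence is asymptotically equidistributed modulo one. Once that is established, Corollary 5.2 yields $\lim_{n\to\infty}\frac{1}{n}\sum_{r=1}^{n} f(b_r/b_n) = \int_0^1 f(x)\,dx$ for every Riemann integrable $f$, and substituting $b_r = \alpha p_r + \beta c_r + \gamma r$ gives the claim verbatim.

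First I would assemble the three constituent sequences and record that each is asymptotically equidistributed modulo one: the primes $p_n$ by Theorem 5.5, the composites $c_n$ by the result quoted just before the lemma (obtained from the asymptotic expansion of the $n$th composite together with Lemma \ref{lemma5.3}), and the identity sequence $r \mapsto r$ as the $c=1$ instance of $n^c$. Then I would invoke Corollary 5.4 to conclude that their linear combination $b_n = \alpha p_n + \beta c_n + \gamma n$ is again asymptotically equidistributed. To make this step rigorous rather than merely ``trivial,'' I would verify the hypothesis of Lemma \ref{lemma5.3} directly, namely that $\lim_{n\to\infty} b_{[nt]}/b_n = t$ for each $t \in (0,1)$. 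This follows from the affine-combination identity
\[
\frac{b_{[nt]}}{b_n} = \frac{p_{[nt]}}{p_n}\cdot\frac{\alpha p_n}{b_n} + \frac{c_{[nt]}}{c_n}\cdot\frac{\beta c_n}{b_n} + \frac{[nt]}{n}\cdot\frac{\gamma n}{b_n},
\]
in which each of $p_{[nt]}/p_n$, $c_{[nt]}/c_n$, and $[nt]/n$ tends to $t$ while the three weights sum to $\bigl(\alpha p_n + \beta c_n + \gamma n\bigr)/b_n = 1$, so the whole expression tends to $t$.

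The main obstacle I anticipate is the degenerate bookkeeping forced by the coefficients $\alpha,\beta,\gamma$, which may be negative and which make the three pieces grow at genuinely different rates: $p_n \sim n\ln n$ dominates, whereas $c_n \sim n$ and $\gamma n$ share the lower order $n$. The nondegeneracy hypothesis $\alpha p_n + \beta c_n + \gamma n \ne 0$ is exactly what guarantees the ratio $b_{[nt]}/b_n$ is eventually well defined and the weights above stay finite. When $\alpha \ne 0$ the prime term dominates and $b_n \sim \alpha n\ln n$ carries the limit $t$ at once; when $\alpha = 0$ but $\beta + \gamma \ne 0$ the leading order is $(\beta+\gamma)n$; and in the fully degenerate case $\alpha = 0$, $\beta + \gamma = 0$ one has $b_n = \beta(c_n - n) \sim \beta\, n/\ln n$, for which $\frac{b_{[nt]}}{b_n} \sim t\cdot\frac{\ln n}{\ln[nt]} \to t$ still holds. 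Carrying out this short case analysis so that the affine combination keeps its $t$ limit is the one point that requires care; everything else is a direct appeal to Corollaries 5.2 and 5.4.
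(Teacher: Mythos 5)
Your proposal is correct and follows the same overall route as the paper: reduce to the single sequence $b_n=\alpha p_n+\beta c_n+\gamma n$, show it is asymptotically equidistributed modulo one via the criterion of Lemma \ref{lemma5.3}, and then read off the displayed limit. The difference is one of substance rather than strategy. The paper's proof is a single sentence that leans on Corollary 5.4 (``linear combinations of asymptotically equidistributed sequences are asymptotically equidistributed,'' proof declared trivial), whereas you actually verify $\lim_{n\to\infty} b_{[nt]}/b_n=t$ for the combination. That extra work is not pedantry: the linear-combination corollary is false for general sequences (take $b_n=n$ and $b_n'=n+(-1)^n$; both satisfy the criterion of Lemma \ref{lemma5.3}, but their difference does not), and even for these specific sequences the naive weighted-average argument breaks down when the weights $\alpha p_n/b_n$, $\beta c_n/b_n$, $\gamma n/b_n$ are unbounded, which happens exactly in your degenerate case $\alpha=0$, $\beta+\gamma=0$. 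There $b_n=\beta(c_n-n)\sim \beta n/\ln n$ by Bojarincev's expansion, and your separate computation $b_{[nt]}/b_n\sim t\,\ln n/\ln[nt]\to t$ is what rescues the claim. So your version proves something the paper merely asserts, and in doing so quietly repairs the weakest link in the paper's Section 5.
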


\begin{proof} 
Since the sequence $p_n,c_n$ and $n$ all satisfy the condition of Lemma \ref{lemma5.3}, they are all asymptotically equidistributed modulo one and hence their linear combination will also be asymptotically equidistributed modulo one.
\qedhere\
\end{proof}

\begin{example}
\begin{equation}
\lim_{x \rightarrow \infty} \frac{1}{\pi (x)} \sum_{p \le x}\Big(\frac{p}{x}\Big)^{a-1} \ln^{b-1} \Big(\frac{cx}{p}\Big)
= \frac{c^a}{a^b}\Gamma (b, a\ln c)
\label{example5.7a}
\end{equation}

\begin{equation}
\sum_{r=1}^{n}{p_r}^{a}({p_1}^{a}+{p_2}^{a} + \ldots + {p_r}^{a})^{b}
\sim \frac{n^{b+1} p_n^{ab+a}}{(b+1)(a+1)^{b+1}}
\label{example5.7b}
\end{equation}

\begin{equation}
\lim_{n \rightarrow \infty} \frac{\sum_{r=1}^{n}r^{ab+a+b}}{\sum_{r=1}^{n}r^a(1^a + 2^a + \ldots + r^a)^b}
= (a+1)^b
\end{equation}

\begin{equation}
\lim_{n \rightarrow \infty} \frac{1}{n} \sum_{r=1}^{n}\frac{p_n ^{2}}{p_n ^{2}+ p_r ^{2}}
=\lim_{n \rightarrow \infty} \frac{1}{n} \sum_{r=1}^{n}\frac{c_n ^{2}}{c_n ^{2}+ c_r ^{2}}
=\lim_{n \rightarrow \infty} \frac{1}{n} \sum_{r=1}^{n}\frac{n ^{2}}{n ^{2}+ r ^{2}} = \frac{\pi}{4}
\end{equation}
where $\Gamma(u,v)$ denotes the incomplete gamma function in $\ref{example5.7a}$ and $a \ge 1$ in $\ref{example5.7b}$.
\end{example}

\section{Summation over prime numbers}

By defining $d_r$ as a characteristic function that vanishes whenever $r$ does not have a predefined characteristic, we can adapt the Master theorem to evaluate summation over prime numbers by defining $d_r = 0$ when $r$ is composite. With this adaptation we have:

\begin{lemma}
If $p_n$ is the $n^{th}$ prime number then,
\begin{equation}
\sum_{r=1}^{n}d_{p_r} g(S_{p_r})f(a_r) \sim \int_{S_2}^{S_{p_r}}g(x)dx \int_{0}^{1}f(x)dx.
\label{lemma6.1}
\end{equation}
\end{lemma}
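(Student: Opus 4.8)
The plan is to imitate the proof of the Master Theorem \ref{theorem3.1a}, but to run it over the prime-indexed weights instead of over all of $1,\dots,n$. The first thing I would notice is that the factor $w_r:=d_{p_r}\,g(S_{p_r})$ depends only on $r$ and is completely independent of $f$ and of the sequence $a_r$, so the left-hand side already has the shape $\sum_{r=1}^{n} w_r f(a_r)$ to which Lemma \ref{lemma2.5a} applies directly, with $w_r$ taking the place of the monotonic, continuous, divergent, positive sequence denoted $g$ there. I would deliberately keep the sum in this form rather than literally setting $d_r=0$ at the composites inside \ref{theorem3.1a}, because doing the latter verbatim collapses the Master Theorem sum $\sum_{r\le p_n} d_r g(S_r)f(a_r)$ to $\sum_{j=1}^{n} d_{p_j} g(S_{p_j}) f(a_{p_j})$, whose argument of $f$ is $a_{p_j}$; matching that to the stated $f(a_r)$ would force me to argue that the subsequence $a_{p_r}$ is uniformly distributed modulo one, which is not guaranteed for an arbitrary uniformly distributed $a_n$. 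Keeping the index $r$ on $a_r$ avoids this gap entirely.

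With this reduction, the first step yields $\sum_{r=1}^{n} d_{p_r} g(S_{p_r}) f(a_r) \sim \big(\int_0^1 f(x)\,dx\big)\sum_{r=1}^{n} d_{p_r} g(S_{p_r})$, so that the whole problem reduces to evaluating the weight sum. For the second step I would set $b(r):=d_{p_r}$, which is strictly positive since every $p_r$ is prime, and observe that its partial sums are exactly $\sum_{j=1}^{r} b(j)=\sum_{j=1}^{r} d_{p_j}=S_{p_r}$; this is precisely where the characteristic-function choice $d_i=0$ at composites is used, because then $S_{p_r}=\sum_{i\le p_r} d_i$ picks up exactly the first $r$ primes. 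Applying Lemma \ref{lemma2.3a} to this $b$ then gives $\sum_{r=1}^{n} d_{p_r} g(S_{p_r}) \sim \int_{S_{p_1}}^{S_{p_n}} g(x)\,dx$, and since $p_1=2$ forces $S_{p_1}=d_1+d_2=S_2$, the lower limit is $S_2$ and the upper limit is $S_{p_n}$ (the $S_{p_r}$ printed in \ref{lemma6.1} appears to be a slip for $S_{p_n}$). Combining the two steps produces the claimed asymptotic.

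The hard part will be checking the regularity hypotheses of Lemmas \ref{lemma2.5a} and \ref{lemma2.3a} for this prime-indexed data, which are the analogues of conditions $(i)$ and $(ii)$ of the Master Theorem. Since $d_{p_r}$ inherits the irregular spacing of the primes, treating $b(x)=d_{p_x}$ and $w(x)=d_{p_x}g(S_{p_x})$ as monotonic continuous functions of a real variable requires first interpolating the prime-indexed values by a smooth monotone function, after which one must control the error term $\delta(n)$ of \ref{lemma2.3a}, namely $\int_1^n b'(y)\,g(S(y))\,dy$, and bound it against the divergent main term $\int_{S_2}^{S_{p_n}} g$; this estimate is the real content of the argument. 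A minor remaining point, easily dispatched, is that shifting the lower limit between $S_1$ and $S_2$ changes the integral only by the bounded quantity $\int_{0}^{S_2} g$, which is asymptotically negligible because $S_{p_n}\to\infty$ and $\int g$ diverges.
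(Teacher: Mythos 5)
Your proposal reaches the same conclusion by what is, at bottom, the same machinery the paper uses -- Lemma \ref{lemma2.5a} to peel off the factor $\int_0^1 f$ and Lemma \ref{lemma2.3a} to evaluate the remaining weight sum -- but the route through that machinery is genuinely different, and better. The paper's entire proof is the single sentence ``replace $d_r$ with $d_{p_r}$ in the Master Theorem,'' i.e.\ it takes the characteristic-function substitution of the Section 6 preamble literally. You correctly identify the two defects of that literal substitution: first, killing $d_i$ at composite $i$ turns the Master Theorem sum into $\sum_{j\le n} d_{p_j} g(S_{p_j}) f(a_{p_j})$, so the argument of $f$ is the subsequence $a_{p_j}$, whose uniform distribution modulo one does not follow from that of $a_n$ (the paper silently writes $f(a_r)$ anyway); second, a weight that vanishes at composites and not at primes cannot satisfy hypothesis $(ii)$ of Theorem \ref{theorem3.1a}, that $d_x g(S_x)$ be monotonic and continuous on $[1,n]$. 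Your re-indexing -- treating $w_r = d_{p_r} g(S_{p_r})$ as a sequence in the prime index $r$, applying Lemma \ref{lemma2.5a} to $\sum_r w_r f(a_r)$ directly, and then Lemma \ref{lemma2.3a} to $b(r)=d_{p_r}$ whose partial sums are exactly $S_{p_r}$ -- repairs both problems and also correctly reads the upper limit $S_{p_r}$ in \ref{lemma6.1} as a misprint for $S_{p_n}$. What your version buys is a proof that actually uses the hypotheses it needs; what it costs, as you candidly note, is that the monotone interpolation of the prime-indexed weights and the bound on the error term $\delta(n)$ of \ref{lemma2.3a} are left unverified -- but the paper does not verify them either, so on that point you are no worse off and considerably more honest about where the real work lies.
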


\begin{proof}
The proof follows directly by replacing $d_r$ with $d_{p_r}$ in the Master Theorem \ref{theorem3.1a}.
\qedhere\
\end{proof}

\begin{lemma} 
If $p_n$ is the $n^{th}$ prime number then,
\begin{equation}
\sum_{r=1}^{n}d_{p_r} g(S_{p_r}) = \int_{S_2}^{S_{p_n}}g(y)dy + O(\delta (n)).
\label{lemma6.2}
\end{equation}
where $O(\delta (n)$ is defined analogous to the definition given in $\ref{lemma2.3a}$.
\end{lemma}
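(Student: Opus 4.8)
The plan is to recognize that Lemma 6.2 is nothing more than Lemma 2.3 applied to a reindexed sequence, exploiting the fact that $d_r$ has been set to vanish whenever $r$ is composite. First I would introduce the reindexed weight $b(r) := d_{p_r}$, the value of $d$ at the $r$-th prime, and regard it as a sequence in the variable $r$. The key elementary observation is that the partial sums of this reindexed sequence coincide with the prime-indexed partial sums already in use: since $d_i = 0$ whenever $i$ is composite, the only surviving terms among $d_1,\dots,d_{p_r}$ sit at the prime indices $p_1,\dots,p_r$, so that
\[
\sum_{i=1}^{r} b(i) = \sum_{i=1}^{r} d_{p_i} = \sum_{i=1}^{p_r} d_i = S_{p_r}.
\]
Thus the partial-sum function $S(r)$ that Lemma 2.3 builds from $b$ is exactly $S_{p_r}$, and in particular $S(1) = S_{p_1} = S_2$ (as $p_1 = 2$) and $S(n) = S_{p_n}$.

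With this identification the left-hand side $\sum_{r=1}^{n} d_{p_r} g(S_{p_r})$ is literally $\sum_{r=1}^{n} b(r) g(S(r))$, the sum handled by Lemma 2.3. I would therefore invoke that lemma directly. Its hypotheses ask that $g$ be Riemann integrable on $[S(1),S(n)] = [S_2, S_{p_n}]$ and that $b(x)g(S(x)) = d_{p_x} g(S_{p_x})$ be monotonic and continuous on $[1,n]$; these are precisely the prime analogues of the standing assumptions carried over from the Master Theorem, so they hold by hypothesis. The conclusion of Lemma 2.3 then reads $\int_{S(1)}^{S(n)} g(y)\,dy + O(\delta(n)) = \int_{S_2}^{S_{p_n}} g(y)\,dy + O(\delta(n))$, which is exactly the claimed formula, with $\delta(n)$ obtained verbatim from the definition in Lemma 2.3 under the substitution $b \mapsto d_{p}$.

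The only point requiring care is the interpretation of $b(x) = d_{p_x}$ as a function of a continuous variable, since the error term $\delta(n)$ contains the integral $\int_1^n b'(y) g(S(y))\,dy$ while $d$, taken literally as a characteristic function, is not differentiable. This is the same informal device already used in the proof of Lemma 2.3, where the weight is tacitly replaced by a smooth interpolant; I would simply carry that convention through the reindexing and note that the boundary terms $|b(n)g(S(n))| + |b(1)g(S(1))|$ and the interpolated integral transform identically, so no new estimate is needed. Consequently the proof reduces to a single appeal to Lemma 2.3, the only genuine content being the telescoping identity $S(r) = S_{p_r}$ displayed above; I expect this bookkeeping of the limits of integration together with the continuous-variable convention to be the main (and only mildly delicate) obstacle.
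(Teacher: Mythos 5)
Your proposal is correct and follows essentially the same route as the paper, which simply states that the result ``follows directly from Lemma 2.3''; you have supplied the reindexing $b(r)=d_{p_r}$ and the identification $S(r)=S_{p_r}$ that the paper leaves implicit. Your added remark about interpreting $d_{p_x}$ as a continuous interpolant in the error term is a fair observation about a looseness the paper itself does not address.
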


\begin{proof}
The proof follows directly from \ref{lemma2.3a}.
\qedhere\
\end{proof}

\begin{example} 
If $\theta(x)$ denotes the Chebyshev function of the first kind then
\begin{equation}
\sum_{p\le x} \frac{\ln p}{\theta (p)} = \ln x + O(1).
\label{example6.3}
\end{equation}
\end{example}

\begin{proof}
Form the Prime Number Theorem we have $\lim_{x \rightarrow \infty} \frac{\theta (x)}{x}=1$. Hence, taking $d_{p_r} = \ln p$, and $g(x) = \frac{1}{x}$ in \ref{lemma2.3a}, we have

\begin{displaymath}
\sum_{p\le x} \frac{\ln p}{\theta (p)} = \ln \theta (x) + O(1) = \ln x +O(1).
\end{displaymath}
\\
By actual computation, we have found that

\begin{displaymath}
\lim_{x \rightarrow \infty}\Big(\sum_{p\le x} \frac{\ln p}{\theta (p)} - \ln x\Big)
\end{displaymath}
\\
converges to a constant value $\theta \approx 0.50904\ldots$. The details of the calculation of the constant $\theta$ are given in section 10.
\qedhere\
\end{proof}

\begin{example} 
Similarly we can show that if $\psi(x)$ denotes the Chebyshev function of the second kind then
\begin{equation}
\sum_{p^{v}\le x} \frac{\ln p}{\psi (p)} = \ln x + O(1).
\label{example6.4}
\end{equation}
\end{example}

\section{Consequences of Merten's Theorem}

One of the most celebrated result in the theory of prime numbers is the Merten's Theorem on the sum of the reciprocal of primes;

\begin{equation}
\sum_{p\le x} \frac{1}{p} = \ln\ln x + M + O\Big(\frac{1}{\ln x}\Big).
\label{paragraph7a}
\end{equation}
\\
where $M \approx 0.2614972\ldots$ is the Meissel-Mertens constant(See \cite{hw}, Theorem 429). In this section, we shall apply the Master Theorem on Merten's Theorem to derive several beautiful identities on prime numbers. Taking $d_{p_r} = 1/p_r$ in \ref{lemma6.2}, we have

\begin{lemma}
If f is monotonically decreasing then
\begin{equation}
\sum_{r \le x}\frac{1}{p_r}f\Big(\frac{1}{p_1}+ \frac{1}{p_1} + \ldots + \frac{1}{p_r}\Big)
\sim \int_{0.5}^{\ln \ln x + M}f(x)dx
\label{lemma7.1}
\end{equation}
\end{lemma}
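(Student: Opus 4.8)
The plan is to read off Lemma~6.2 (equation~\ref{lemma6.2}) with the specific choices $d_{p_r}=1/p_r$ and $g=f$, and then to pin down the two limits of integration using Mertens' theorem. First I would make the substitution $d_{p_r}=1/p_r$, so that the partial sum becomes $S_{p_r}=\frac{1}{p_1}+\frac{1}{p_2}+\cdots+\frac{1}{p_r}$, which is exactly the argument fed to $f$ in the statement. Lemma~6.2 then yields
\begin{displaymath}
\sum_{p_r\le x}\frac{1}{p_r}f(S_{p_r}) = \int_{S_2}^{S_{p_n}}f(y)\,dy + O(\delta(n)),
\end{displaymath}
where $p_n$ is the largest prime not exceeding $x$. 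Since $1$ is not prime we have $S_1=0$ and $S_2=d_{2}=1/p_1=1/2$, which explains the lower limit $0.5$ in the assertion.

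Second, I would evaluate the upper limit by invoking Mertens' theorem~\ref{paragraph7a}, which gives $S_{p_n}=\sum_{p\le x}1/p=\ln\ln x+M+O(1/\ln x)$. Replacing the exact endpoint $S_{p_n}$ by the clean value $\ln\ln x+M$ alters the integral by $\int_{\ln\ln x+M}^{S_{p_n}}f$, and because $f$ is monotone this correction is at most $|f(\ln\ln x+M)|\cdot O(1/\ln x)$ in absolute value; I would show it is of smaller order than the main integral.

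Third comes the error term $O(\delta(n))$. Unpacking $\delta(n)$ as in Lemma~2.3 (equation~\ref{lemma2.3a}), it splits into the two boundary pieces $\frac{1}{p_n}f(S_{p_n})$ and $\frac12 f(1/2)$, together with the integral $\int_1^n b'(y)f(S(y))\,dy$ arising from the derivative of $1/p_y$. The first boundary term tends to $0$ since $1/p_n\to 0$, while the second is a fixed constant; I would bound each contribution against the principal integral.

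The hard part will be justifying the symbol $\sim$ itself, i.e.\ that $\int_{0.5}^{\ln\ln x+M}f$ genuinely dominates every error term collected above. This forces the main integral to grow without bound as $x\to\infty$ (otherwise the constant boundary term $\frac12 f(1/2)$ cannot be absorbed), which is a tacit hypothesis on $f$: it must be monotonically decreasing yet decay slowly enough that $\int_{0.5}^{\infty}f$ diverges, and eventually of one sign so that the asymptotic ratio is meaningful. I would make this hypothesis explicit and check that it is met by the functions to which the authors subsequently apply the lemma.
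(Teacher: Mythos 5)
Your proposal follows exactly the paper's route: the authors' entire proof is the single sentence ``Taking $d_{p_r}=1/p_r$ in \ref{lemma6.2} and simplifying using \ref{paragraph7a} we obtain the required result,'' which is precisely your first two steps. Your additional scrutiny of the boundary terms and of the tacit requirement that $\int_{0.5}^{\infty}f(x)\,dx$ diverge (so that the constant contribution $\tfrac12 f(1/2)$ can be absorbed and the relation $\sim$ is meaningful) goes beyond what the paper supplies and correctly identifies an unstated hypothesis in the lemma.
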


\begin{proof}
Taking $d_{p_r} = 1/p_r$ in \ref{lemma6.2} and simplifying using \ref{paragraph7a} we obtain the required result.
\qedhere\
\end{proof}

\begin{corollary}
If $c>1$ and $m =[\ln p_n]$ then
\begin{equation}
\sum_{r=1}^{m}\frac{c^{M+H_r}}{r}f(a_r) \sim \sum_{r=1}^{n}\frac{c^{\gamma+Q_r}}{p_r}f(a_r)
\sim \frac{c^{\gamma +M}}{\ln c}\int_{0}^{1}f(x)dx
\label{corollry7.3}
\end{equation}
\\
where $H_r = 1 + \frac{1}{2} + \ldots{} +\frac{1}{r}$ and $Q_r = \frac{1}{p_1} + \frac{1}{p_2} + \ldots{} + \frac{1}{p_r}$.
\end{corollary}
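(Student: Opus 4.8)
The plan is to evaluate the two sums independently, each as an instance of the (prime) Master Theorem with an exponential weight, and then to show that the choice $m=[\ln p_n]$ is exactly what forces the two evaluations to coincide. The guiding observation is that the additive constants in the exponents, $M$ for the prime sum and $\gamma$ for the harmonic sum, are precisely the constants supplied by Merten's Theorem \ref{paragraph7a} and by the harmonic asymptotic $H_r=\ln r+\gamma+o(1)$ (which is \ref{lemma2.2b} with $f(x)=1/x$); it is this pairing that makes the hidden growth of the two sums identical.

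For the second sum I would apply Lemma \ref{lemma6.1} with $d_{p_r}=1/p_r$ and $g(x)=c^{\gamma+x}$, so that $S_{p_r}=Q_r$ and the weight becomes $c^{\gamma+Q_r}/p_r$ exactly. This gives
\[
\sum_{r=1}^{n}\frac{c^{\gamma+Q_r}}{p_r}f(a_r)\sim\int_{0.5}^{Q_n}c^{\gamma+x}\,dx\int_0^1 f(x)\,dx=\frac{c^\gamma}{\ln c}\bigl(c^{Q_n}-c^{0.5}\bigr)\int_0^1 f(x)\,dx .
\]
Merten's Theorem at $x=p_n$ gives $Q_n=\ln\ln p_n+M+O(1/\ln p_n)$, and exponentiating turns the $O(1/\ln p_n)$ in the exponent into a multiplicative factor $1+o(1)$, so that $c^{Q_n}=c^{M}(\ln p_n)^{\ln c}(1+o(1))$. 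Since $c>1$ this diverges, the boundary term $c^{0.5}$ is negligible, and the second sum is asymptotic to $\tfrac{c^{\gamma+M}}{\ln c}(\ln p_n)^{\ln c}\int_0^1 f$.

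For the first sum I would run Theorem \ref{theorem3.1a} with $d_r=1/r$, $S_r=H_r$, and $g(x)=c^{M+x}$, producing $\int_1^{H_m}c^{M+x}\,dx\int_0^1 f$. Using $H_m=\ln m+\gamma+o(1)$ the same exponentiation yields $c^{H_m}=c^{\gamma}m^{\ln c}(1+o(1))$, so the first sum is asymptotic to $\tfrac{c^{\gamma+M}}{\ln c}m^{\ln c}\int_0^1 f$. Finally, because $m=[\ln p_n]$ we have $m=\ln p_n+O(1)$, hence $m^{\ln c}/(\ln p_n)^{\ln c}\to1$, which matches the two growth factors and establishes the middle equivalence; the displayed constant $\tfrac{c^{\gamma+M}}{\ln c}\int_0^1 f$ is then the common coefficient of the divergent factor $(\ln p_n)^{\ln c}$ shared by both sums.

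The main obstacle I anticipate is not the integration but the bookkeeping around the hypotheses and the divergence. One must check condition $(ii)$ of the Master Theorem, i.e. that the interpolated weights $c^{\gamma+Q_x}/p_x$ and $c^{M+H_x}/x$ are eventually monotonic and continuous; a short computation shows they behave like $(\ln x)^{\ln c-1}/x$ and $x^{\ln c-1}$ respectively, which are eventually monotonic for every fixed $c>1$. One also has to confirm that the error terms $\delta(n)$ from Lemma \ref{lemma2.3a}, together with the lower-endpoint contributions, are $o\bigl((\ln p_n)^{\ln c}\bigr)$ and are therefore swallowed by the main term. The only delicate conceptual point is that each sum diverges, so the final ``$\sim$'' must be read as identifying the common coefficient of the growth factor $(\ln p_n)^{\ln c}$ rather than as convergence to a constant, and it is precisely the calibration $m=[\ln p_n]$ that synchronises the two divergences.
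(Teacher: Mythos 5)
Your proposal is correct and is essentially the argument the paper intends: the paper states this corollary without proof, but the surrounding machinery (Lemma \ref{lemma6.1} with $d_{p_r}=1/p_r$ and an exponential $g$, Merten's estimate \ref{paragraph7a} for $Q_n$, the harmonic asymptotic for $H_m$, and the calibration $m=[\ln p_n]$) is exactly what you use. Your computation that both sums grow like $\frac{c^{\gamma+M}}{\ln c}(\ln p_n)^{\ln c}\int_0^1 f(x)\,dx$ is right, and it correctly establishes the first ``$\sim$''; note that this also shows the corollary's final ``$\sim$'' to the bare constant $\frac{c^{\gamma+M}}{\ln c}\int_0^1 f(x)\,dx$ cannot hold literally (the paper's own Table IV for $c=e$, $f=1$ shows both sums near $70$, far from $e^{\gamma+M}\approx 2.31$), so your reading of that constant as the coefficient of the common divergent factor is the only tenable one and is, if anything, a correction to the paper's statement rather than a gap in your proof.
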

 
\begin{example} 
Taking $c=e$, $f(x)=1$ and $m =[\ln p_n]$ in $\ref{corollry7.3}$ we obtain
\\
\begin{equation}
\sum_{r=1}^{m} \frac{e^{M + 1 + \frac{1}{2} + \ldots{} +\frac{1}{r}}}{r}
= \sum_{r=1}^{n} \frac{e^{\gamma + \frac{1}{p_1} + \frac{1}{p_2} +\ldots{}+ \frac{1}{p_r}}}{p_r}
+ H + O\Big(\frac{1}{\ln p_n}\Big)
\label{example7.4}
\end{equation}
\\
where $H$ is a constant. Calculation of the constant $H$ is given in Section 9.
\end{example}
 
\begin{example}
Taking $d_{p_r}= 1/r + 1/p_r$ in $\ref{lemma6.1}$, we obtain the identity; given in the introduction in section 1; that connects all the four fundamental constants $e,\pi , \gamma$ and $M$.
\begin{equation}
\lim_{n \rightarrow \infty}\frac{1}{p_n}\sum_{r=1}^{n}\Big(\frac{1}{r}+\frac{1}{p_r}\Big)\frac{e^{G_r}}{1+{a_r}^{2}}
= \frac{\pi e^{\gamma + M}}{4}
\label{example7.5}
\end{equation}
\\
where $G_r = 1 + \frac{1}{2} + \ldots{} +\frac{1}{r} + \frac{1}{p_1} + \frac{1}{p_2} +\ldots{}+ \frac{1}{p_r}$.
\end{example}

\begin{example}
Taking $c=e$ and $f(x)=1$ in $\ref{corollry7.3}$ and comparing the result with Merten's theorem
$\prod_{p \le x}\Big(1-\frac{1}{p}\Big) \sim \frac{e^{-\gamma}}{\ln x}$, we obtain
\begin{equation}
\prod_{r=1}^{n}\Big(\frac{p_r}{p_r - 1}\Big)
= \sum_{r=1}^{n} \frac{e^{\gamma -M + \frac{1}{p_1} + \frac{1}{p_2} +\ldots{}+ \frac{1}{p_r}}}{p_r} + O(1).
\label{example7.6}
\end{equation}
\end{example}

The above identity is interesting because it connects a product involving the first $n$ primes to a sum involving the 
first $n$ primes. By actual calculation, we found that $O$-constant which we denote by $\rho \approx 0.76774\ldots$.

\section{A beautiful formula}

In this section, we shall demonstrate how our idea of unifying different area of mathematics can be used to yields beautiful results. As an example, we shall derive the intriguing formula that we have seen in the introduction in section 1.
\\
\begin{displaymath}
\frac{1}{\ln^2 n}\sum_{r=1}^{n} \frac{1}{\gamma_r}\Big(\frac{1}{r}+\frac{1}{p_r}\Big)
\Big(\tan^{-1} \frac{\gamma_r}{\gamma_n}\Big)^2 \exp\Big(H_r + \frac{1}{p_1} + \frac{1}{p_2} + \ldots + \frac{1}{p_r}\Big)
\end{displaymath}
\begin{equation}
= \frac{e^{\gamma + M}}{4}\Big(G - \frac{7\zeta(3)}{4\pi}\Big) + O\Big(\frac{\ln\ln n}{\ln n}\Big)
\end{equation}

This formula brings together the elements from nine different topics of number theory into a single beautiful result. We have the Apery's constant $\zeta(3)$, Catalan constant G, Euler-Mascheroni constant $\gamma$, exponential constant $e$, harmonic numner $H_n$, Meissel-Merten constant $M$, $n^{th}$ prime number $p_n$, imaginary part of the $n^{th}$ non-trivial zero of the Riemann zeta function $\gamma_n$ and the fundamental constants of a circle $\pi$.
\\

Since $\gamma_n \sim \frac{2\pi}{\ln n}$, $\gamma_n$ is asymptotically equidistributed modulo one. Taking 
\\
\begin{displaymath}
d_r = \frac{1}{r} + \frac{1}{p_r}, a_r = \frac{\gamma_r}{\gamma_n}, f(x) = \frac{(\tan^{-1}x)^2}{x}
\end{displaymath}
\\
in the Master Theorem, the result follows.

\section{Generalized zeta function and Euler constants}

Throughout this paper, we have used summation of the type $\sum_{r=1}^{n}d_r f(S_r )$ where  $S_r=\sum_{k=1}^{r}d_k$ . This suggests that many mathematical properties could be unearthed by studying series of this type. Although there are several generalizations of the Riemann zeta series and the Euler constants, in this section, we present a new generalization of each of these two terminologies.

\begin{definition}
The weighted zeta function is defined as
\\
\begin{equation}
\zeta_{d_r}(s) = \sum_{n=1}^{\infty}\frac{d_n}{S_n ^{s}}.
\label{paragraph8a}
\end{equation}
\end{definition}

Thus Riemann zeta series (See~\cite{et}) is the simplest of the family of weighted zeta series, corresponding to the special case with unit weights $d_n=1$. The Riemann zeta series is related to prime numbers through the Euler product; it would be interesting to investigate the analogues of prime numbers for the weighted zeta series, if there are any.

\begin{definition}
If  $d_rf(S_r )$  is positive, strictly decreasing and divergent then the generalized Euler constants are defined as
\\
\begin{equation}
\gamma _{d_x,f(x),g(x)} = \lim_{n \rightarrow \infty}\Big(\sum_{r=1}^{n}d_r f(a_r) g(S_r)
- \int_{0}^{1}f(x)dx \int_{S_1}^{S_n}g(x)dx\Big).
\label{paragraph8b}
\end{equation}
\end{definition}

The classical Euler-Mascheroni, $\gamma \approx 0.577215665\ldots$, corresponds to the special case when $d_x=f(x)=1$ and $g(x)=1/x$.

\begin{example}
\begin{displaymath}
\gamma _{1/x,1,x^a} = \gamma(a) = \lim_{n \rightarrow \infty}\Big(\sum_{r=1}^{n}\frac{H_r^a}{r} - \frac{H_n ^{a+1}}{a+1}\Big).
\end{displaymath}
\end{example}
With this definition, we have $\gamma(0) = \gamma$, $\gamma(1) \approx 0.8225\ldots.$

\section{Computations}

We have performed some computer calculation to determine the $O(1)$ terms in some of the formulas as well as to check 
accuracy of some of them. All programs were written in Intel$^{\small{\textregistered}}$ Fortran and run on the 64 bits AMD$^{\small{\textregistered}}$ Opteron 2700 MHz processors.

\subsection{Constant of \ref{example6.3}}

We will present the data for determination of $O(1)$ term in:
\[
\lim_{x \rightarrow \infty }\Big(\sum_{p\le x} \frac{\ln p}{\theta (p)} - \ln x\Big).
\]
The results are presented in the Table II; the values of $x$ are in powers of 10 and powers of 2, the largest is 
$ x=2^{46}=7.037\ldots \times 10^{13}$. We infer from this Table, that in \ref{example6.3}  $\theta \approx 0.50904...$.

\vskip -1.7cm
\begin{center}
{\sf TABLE {\bf I}}\\
\bigskip
\begin{tabular}{||c|c|c||} \hline
$x$ & $ \sum_{p\le x} \frac{\ln p}{\theta (p)} $ & $ \sum_{p\le x} \frac{\ln p}{\theta (p)} - \ln(x)$ \\ \hline
           10000000  &   15.6266542966473 &     0.5085586456\\ \hline
           16777216  &   16.1442710909375 &     0.5087387574\\ \hline
       \vdots   &   \vdots &  \vdots  \\ \hline
     10000000000000  &   29.4426443525143 &     0.5090381435\\ \hline
     17592186044416  &   30.0075139149438 &     0.5090379703\\ \hline
     35184372088832  &   30.7006623741728 &     0.5090392489\\ \hline
     70368744177664  &   31.3938126082563 &     0.5090423025\\ \hline
\end{tabular} \\
\end{center}
\vskip 0.4cm
Again values of $x$ formed  the progressions $x=10^n$ and $x=2^n$ and last value is $x=2^{46}=7.04\times 10^{13}$ 
and it took 1 month CPU time  to reach this value.

\subsection{Constant of \ref{example6.4}}

It is not easy to estimate the big-O constant  appearing in \ref{example6.4} because the Chebyshev function $\psi(x)$ 
for each $x$ has to be calculated separately, i.e. it is not possible to use the value $\psi(x)$ to calculate
$\psi(y)$  for $y>x$. It is clearly  seen from the following formula:
\bee
\psi(x) = \sum_{p^k\le x}\log (p)=\sum_{n \leq x} \Lambda(n) = \sum_{p\le x}\lfloor\log(x)/\log(p)\rfloor\log(p)
\label{psi_x}
\eee
where $\Lambda(n)$ is of course the Mangoldt function. In the last form in (\ref{psi_x}) the summand evidently depends on $x$.
Due to this obstacle we have calculated
\[
\sum_{p<x} \frac{\ln(p)}{\psi(p)} - \ln(x)
\]
only for arithmetic progression $x=n1000000$, instead of geometric progressions in the previous cases. The obtained data is 
presented in the Table IV.

\vskip 0.4cm
\begin{center}
{\sf TABLE {\bf II}}\\
\bigskip
\begin{tabular}{||c|c|c||} \hline
$x$ & $\sum_{p<x} \frac{\ln(p)}{\psi(p)} $ & $ \sum_{p<x} \frac{\ln(p)}{\psi(p)} - \ln(x)$ \\ \hline
            1000000  &   13.5176919008455 &    -0.2978186571188\\ \hline
            2000000  &   14.2109984371948 &    -0.2976593013294\\ \hline
            3000000  &   14.6162538296483 &    -0.2978690169841\\ \hline
            4000000  &   14.9037288388882 &    -0.2980760801960\\ \hline
            \vdots   &   \vdots    &  \vdots \\ \hline
          120000000  &   18.3046100971201 &    -0.2983922036263\\ \hline
          121000000  &   18.3128899746922 &    -0.2984111288688\\ \hline
          122000000  &   18.3211283777276 &    -0.2984032249699\\ \hline
          123000000  &   18.3292829091801 &    -0.2984120041566\\ \hline
          124000000  &   18.3374008408318 &    -0.2983912827375\\ \hline
          125000000  &   18.3454350973536 &    -0.2983891979130\\ \hline
          126000000  &   18.3534102043258 &    -0.2983822605899\\ \hline
\end{tabular} \\
\end{center}
\vskip 0.4cm
Above results suggest that in \ref{example6.4}:
\[
\sum_{p^{v}\le x} \frac{\ln p}{\psi (p)}= \ln x +O(1)
\]
there will be no equality, as LHS contains more terms and will be larger than:
\[
\sum_{p\le x} \frac{\ln p}{\psi (p)}
\]
Indeed the computer has produced data:

\vskip 0.4cm
\begin{center}
{\sf TABLE {\bf III}}\\
\bigskip
\begin{tabular}{||c|c|c||} \hline
$x$ & $ \sum_{p^{v}\le x} \frac{\ln p}{\psi (p)} $ & $ \sum_{p^{v}\le x} \frac{\ln p}{\psi (p)}-\ln(x)$ \\ \hline
            1000000  &   50.9115451893350 &    37.0960346313707\\ \hline
            2000000  &   54.6640936160706 &    40.1554358775464\\ \hline
            3000000  &   56.6781935153063 &    41.7640706686740\\ \hline
            4000000  &   57.2884765277029 &    42.0866716086188\\ \hline
            5000000  &   59.5704206070202 &    44.1454721366218\\ \hline
            6000000  &   60.2669838560583 &    44.6597138288660\\ \hline
            7000000  &   60.6473757815097 &    44.8859550744901\\ \hline
            8000000  &   61.0151186141244 &    45.1201665144803\\ \hline
            9000000  &   62.1874985975655 &    46.1747634622650\\ \hline
           10000000  &   62.7528494571091 &    46.6347538061507\\ \hline
          \vdots   &  \vdots & \vdots  \\  \hline
           26000000  &   66.9185401883590 &    49.8449330923732\\ \hline
           27000000  &   66.9728912056416 &    49.8615437816730\\ \hline
           28000000  &   67.0256045468440 &    49.8778894787045\\ \hline
           29000000  &   67.2571282847139 &    50.0743218967631\\ \hline
           30000000  &   67.3132959212713 &    50.0965879816448\\ \hline
           31000000  &   67.3999358663197 &    50.1504381038703\\ \hline

\end{tabular} \\
\end{center}
\vskip 0.4cm

\subsection{Constant of \ref{example7.4}}

We tried to determine the constant hidden in $O(1)$ in \ref{example7.4}. The results are not conclusive as the difference
\[
\sum_{r=1}^{m} \frac{e^{M + 1 + \frac{1}{2} + \ldots{} +\frac{1}{r}}}{r}
- \sum_{r=1}^{n} \frac{e^{\gamma + \frac{1}{p_1} + \frac{1}{p_2} +\ldots{}+ \frac{1}{p_r}}}{p_r}
\]
displays fluctuations between 5 and 7, see Table III

\vskip -1.4cm
\begin{center}
{\sf TABLE {\bf IV}}\\
\bigskip
\begin{tabular}{||c|c|c|c||} \hline
$x$ & $ \sum_{r=1}^{m} \frac{e^{M + 1 + \frac{1}{2} + \ldots{} +\frac{1}{r}}}{r}$ & $ \sum_{r=1}^{n} \frac{e^{\gamma + \frac{1}{p_1} + \frac{1}{p_2} +\ldots{}+ \frac{1}{p_r}}}{p_r}$  & difference\\ \hline
           10000000  &     41.0329563695293 &    34.3855942478502 &      6.6473621216792\\ \hline
           16777216  &     41.0329563695293 &    35.5826945591639 &      5.4502618103654\\ \hline
           33554432  &     43.4147084333527 &    37.1863081123819 &      6.2284003209708\\ \hline
           67108864  &     45.7926459440599 &    38.7895728672373 &      7.0030730768227\\ \hline
               \vdots  & \vdots  & \vdots & \vdots \\ \hline
         100000000000  &     62.3588796562188 &    55.6923315736851 &      6.6665480825336\\ \hline
       137438953472  &     62.3588796562188 &    56.4280087636232 &      5.9308708925955\\ \hline
       274877906944  &     64.7168951615670 &    58.0315248369373 &      6.6853703246296\\ \hline
       549755813888  &     67.0732528789821 &    59.6350423017193 &      7.4382105772628\\ \hline
      1000000000000  &     67.0732528789821 &    61.0190970811536 &      6.0541557978285\\ \hline
      1099511627776  &     67.0732528789821 &    61.2385589160106 &      5.8346939629716\\ \hline
      2199023255552  &     69.4280715529841 &    62.8420759971757 &      6.5859955558084\\ \hline
      4398046511104  &     71.7814576116916 &    64.4455941070075 &      7.3358635046840\\ \hline
      8796093022208  &     71.7814576116916 &    66.0491118126049 &      5.7323457990866\\ \hline
     10000000000000  &     71.7814576116916 &    66.3458668229100 &      5.4355907887816\\ \hline
     17592186044416  &     74.1335068131232 &    67.6526285355308 &      6.4808782775924\\ \hline
     35184372088832  &     76.4843056253357 &    69.2561434465050 &      7.2281621788307\\ \hline

\end{tabular} \\
\end{center}
\vskip 0.4cm

\subsection{Verification of \ref{example7.5}}

We shall discuss the formula

\begin{displaymath}
\lim_{n \rightarrow \infty}\frac{1}{p_n}\sum_{r=1}^{n}\Big(\frac{1}{r}+\frac{1}{p_r}\Big)\frac{e^{G_r}}{1+{a_r}^{2}}
= \frac{\pi e^{\gamma + M}}{4}=1.8169017889\ldots,
\end{displaymath}

\begin{displaymath}
G_r = 1+ \frac{1}{2} + \ldots{} +\frac{1}{r} + \frac{1}{p_2} +\ldots{}+ \frac{1}{p_r}
\end{displaymath}
\\
which connects the fundamental mathematical constants $\pi, e, \gamma, M$ and contains also sequence $a_r$ uniformly 
distributed on the interval $(0,1)$. The numbers $a_r$ were generated from the sequence

\[
Z_{r+1} = A Z_r~ {\rm mod }~C, ~~~~a_r=\frac{Z_r}{C}
\]
\\
where  $A=1203248318$ and $C=2^{31} -1$ (Mersenne prime). It is known, that period of $Z_n$  is C and the sequence is 
uniformly distributed over $(0,1)$  \cite{Fishman-1982}. The period of $a_r$ was closed during calculations many times 
as there are $346065536839>2^{38}$ primes up to $10^{13}$,  but we believe it is not an obstacle in this problem.  The 
results are presented in the Table I; the values of $x$ form the progression $10^n$ and $2^n$ --- the last value is for
$x=2^{46}=7.037\ldots \times 10^{13}$.

\vskip 0.4cm
\begin{center}
{\sf TABLE {\bf V}}\\
\bigskip
\begin{tabular}{||c|c|c||} \hline
$n$ & $ \sum_{r=1}^{n}\Big(\frac{1}{r}+\frac{1}{p_r}\Big)\frac{e^{G_r}}{1+{a_r}^{2}} $ & $ \sum_{r=1}^{n}\Big(\frac{1}{r}+\frac{1}{p_r}\Big)\frac{e^{G_r}}{1+{a_r}^{2}}- \frac{\pi e^{\gamma + M}}{4}$ \\ \hline
    1.000000000 $\times 10^{  9}$  &      1.914474310 &     0.097544058\\ \hline
    4.294967291 $\times 10^{  9}$  &      1.907421278 &     0.090491026\\ \hline
   1.0000000000 $\times 10^{ 10}$  &      1.903773984 &     0.086843732\\ \hline
   1.7179869143 $\times 10^{ 10}$  &      1.901578829 &     0.084648577\\ \hline
      \vdots   &   \vdots &  \vdots  \\ \hline
 1.000000000000 $\times 10^{ 12}$  &      1.888054174 &     0.071123922\\ \hline
 1.099511627689 $\times 10^{ 12}$  &      1.887789938 &     0.070859685\\ \hline
 2.199023255531 $\times 10^{ 12}$  &      1.885918614 &     0.068988361\\ \hline
 4.398046511093 $\times 10^{ 12}$  &      1.884144386 &     0.067214134\\ \hline
 8.796093022151 $\times 10^{ 12}$  &      1.882458766 &     0.065528514\\ \hline
1.0000000000000 $\times 10^{ 13}$  &      1.882156156 &     0.065225904\\ \hline
1.7592186044399 $\times 10^{ 13}$  &      1.880856104 &     0.063925852\\ \hline
3.5184372088777 $\times 10^{ 13}$  &      1.879329708 &     0.062399456\\ \hline
7.0368744177643 $\times 10^{ 13}$  &      1.877873176 &     0.060942923\\ \hline
\end{tabular} \\
\end{center}
\vskip 0.4cm

\subsection{Constant of \ref{example7.6}}

Writing \ref{example7.6} in the form $L(n)=e^{\gamma-M+o(1)}R(n)+O(1)$, where:

\bee
L(n)=\prod_{k=1}^n \frac{p_k}{p_k-1}
\eee
and $R(n)$ is RHS:
\bee
R(n)=\sum_{r=1}^n \frac{1}{p_r}\exp{\left(\sum_{k=1}^r \frac{1}{p_k}\right)}.
\eee
\\
we see, that $\rho$ can be found by plotting points $(L(n), R(n))$ for some values of $n$ and fitting the straight line to 
these points by  the least square method.  We computed the values of $(L(n), R(n))$ on the computer and stored to the file 
at $n=10^6, 2^{24}, 2^{25}, \ldots, 10^{13}, 2^{46}=  7.037...\times 10^{13}$. The figure 1 presents obtained points.
\\
\begin{figure}
\vspace{-3.0cm}
\hspace{-3.5cm}
\begin{center}
\includegraphics[height=0.5\textheight,angle=0]{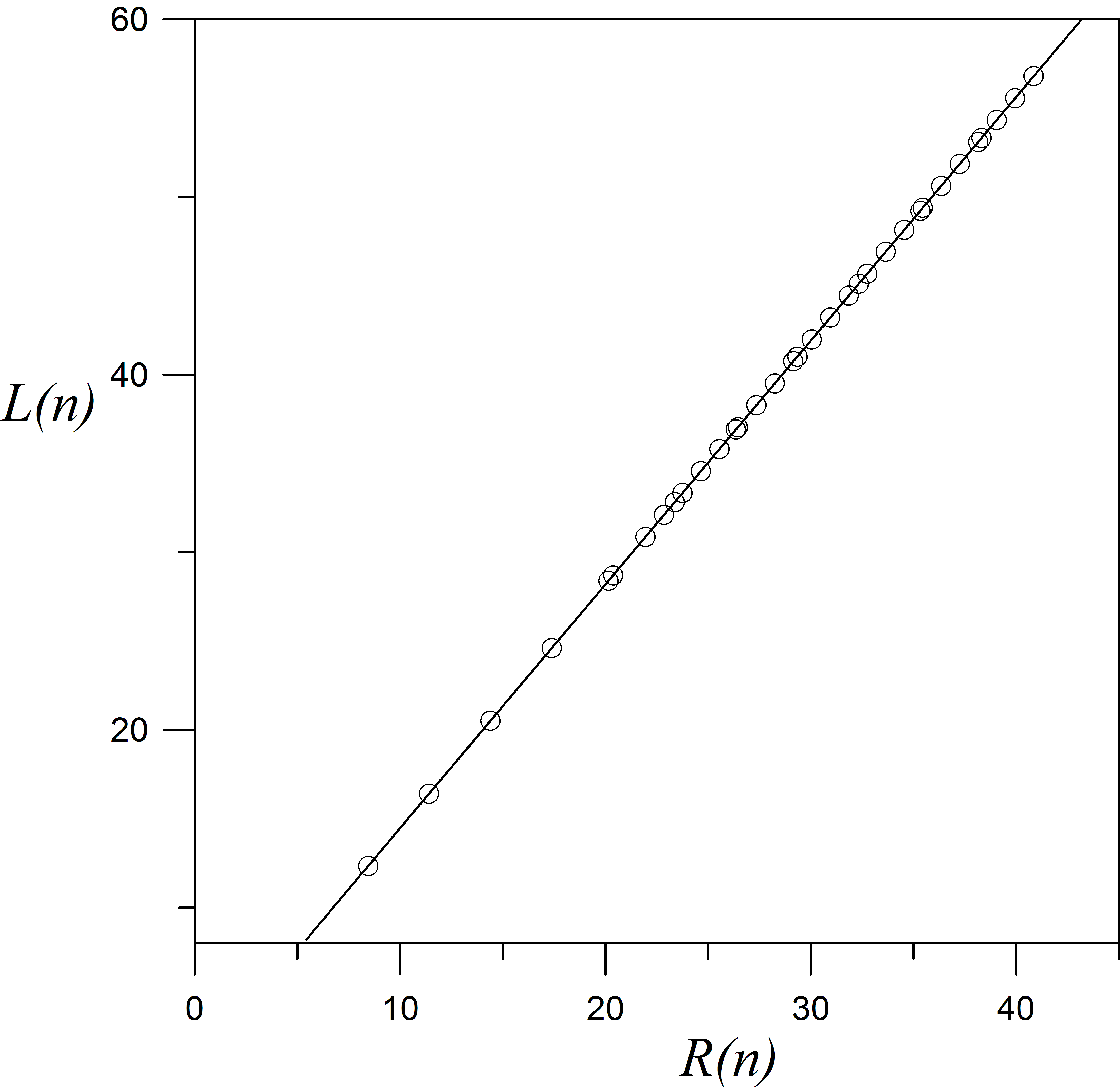} \\
\end{center}
\end{figure}
\\
Fitting the straight line to the last 4 points (in fact two points determine line!)
gives $L = 1.371247673 R + 0.7676022888$,
while $e^{\gamma-M}=1.3712441303$, thus 6 digits of the correct value are
reproduced. Assuming that for $\rho$  also 6 digits are correct we have $\rho=0.76770$.
Inspection of the Sloane's Online Encyclopedia of Integer Sequences does not
indicate what this number can be in terms of other mathematical constants.

\newpage

\section{Future works}
The scope for future works should focus on extending the master theorem to other areas including the complex domain. In particular since uniform distribution is a special case of low discrepancy sequences, we would want to extend the results of this paper to low discrepancy sequences in higher dimensions. Specific area of interest would be the Quasi-Monte Carlo method and particularly the Koksma-Hlawka Inequality (See~\cite{nt}).

\section{Acknowledgment}
The authors are to grateful to Andrew Odlyzko, University of Minnesota, USA, for providing valuable insights into the properties of equidistributed sequences; and to Pierre Dusart, University of Limoges, France, for sending paper prints of several publications on prime numbers for reference.

\end{document}